\newtheorem{lemma}{Lemma}
\newtheorem{thm}{Theorem}
\newtheorem{cor}{Corollary}
\newtheorem{prop}{Proposition}
\newtheorem{definition}{Definition}
\newcommand{\Z}[0]{\ensuremath \mathbb{Z}}
\newcommand{\R}[0]{\ensuremath \mathbb{R}}
\newcommand{\ip}[2]{#1  #2}
\newcommand{\ones}{\bm{1}}
\newcommand{\blue}[1]{\textcolor{blue}{#1}}
\newcommand{\red}[1]{{\color{red} #1}}
\newcommand{\rank}{\text{rank}}
\newcommand{\prank}{\text{prank}}
\newcommand{\modd}{~(\textup{mod}~2)}
\newcommand{\mom}[1]{{\left\vert\kern-0.25ex\left\vert\kern-0.25ex\left\vert #1 \right\vert\kern-0.25ex\right\vert\kern-0.25ex\right\vert}}
\newcommand{\algmargin}{\the\ALG@thistlm}
\newlength{\whilewidth}
\algnewcommand{\parState}[1]{\State%
  \parbox[t]{\dimexpr\linewidth-\algmargin}{\strut #1\strut}}
\newcounter{mynotes}
\newcommand{\conv}{\text{conv}}
\newcommand{\poly}{\text{poly}}
\title{On Polytopes with Linear Rank with respect to Generalizations of the Split Closure}
\author[1]{Sanjeeb Dash\thanks{sanjeebd@us.ibm.com}}
\author[2]{Yatharth Dubey\thanks{yatharthdubey7@gatech.edu}}
\affil[1]{IBM Thomas J. Watson Research Center}
\affil[2]{Georgia Institute of Technology}
\date{\today}
\begin{document}

\maketitle

\begin{abstract}
In this paper we study the rank of polytopes contained in the 0-1 cube with respect to $t$-branch split cuts and $t$-dimensional lattice cuts for a fixed positive integer $t$.
These inequalities are the same as split cuts when $t=1$ and generalize split cuts when $t > 1$.
For polytopes contained in the $n$-dimensional 0-1 cube, the work of Balas implies that the split rank can be at most $n$, and this bound is tight as Cornu\'ejols and Li gave an example with split rank $n$. All known examples with high split rank -- i.e., at least $cn$ for some positive constant $c < 1$ -- are defined by exponentially many (as a function of $n$) linear inequalities.
For any fixed integer $t > 0$, we give a family of polytopes contained in $[0,1]^n$ for sufficiently large $n$ such that each polytope has empty integer hull, is defined by $O(n)$ inequalities, and has rank $\Omega(n)$ with respect to $t$-dimensional lattice cuts.
Therefore the split rank of these polytopes is $\Omega(n)$.
It was shown earlier that there exist generalized branch-and-bound proofs, with logarithmic depth, of the nonexistence of integer points in these polytopes.
Therefore, our lower bound results on split rank show an exponential separation between the depth of branch-and-bound proofs and split rank.
\end{abstract}

\section{Introduction}

Split cuts, split closures, and split rank are important and widely studied concepts in cutting plane theory, the subarea of integer programming that deals with valid inequalities for integral points in polyhedra.
Let $P_I$ denote the integer hull of a polyhedron $P$, or the convex hull of integral points in $P$. A split cut for $P \subseteq \R^n$ is a linear inequality satisfied by the points $\{x \in P : \ip{\pi}{x} \leq \delta\}$ and $\{x \in P : \ip{\pi}{x} \geq \delta+1 \}$ for some $\pi \in \Z^n$ and $\delta \in \Z$ ($\pi x$ denotes the inner product between the vectors $\pi$ and $x$).
The set of points in $P$ satisfying all split cuts is called the {\em split closure} of $P$ and is denoted by $SC(P)$.
It was shown in \cite{split-closure} that the split closure of a rational polyhedron is a rational polyhedron.
Furthermore, it is known that the number of times the split closure operator needs to be repeated to obtain $P_I$ is finite.
Let $SC^k(P)$ be defined as follows:
\begin{equation}\label{eq:sck} SC^0(P) = P, \text{ and } SC^k(P) = SC(SC^{k-1}(P) \text{ for } k=1, 2, \ldots.
\end{equation}
The split rank of $P$ is the minimum $k \geq 0$ such that $SC^k(P) = P_I$. 

It follows from the work of Balas\cite{balas} that if a polytope $P$ is contained in the $n$-dimensional cube $[0,1]^n$, then the split rank of $P$ is at most $n$. 
Cornu\'ejols and Li \cite{cornujols2002rank} showed that the split rank of the $n$-dimensional ``cropped cube" contained in $[0,1]^n$ (see definition \ref{def-crop}) is exactly $n$.
The only other known polytopes contained in $\mathbb{R}^n$ with split rank $\Omega(n)$ are certain packing and covering polytopes contained in $[0,1]^n$, and the subtour elimination polytope associated with TSP problems \cite{dey2021lower, basu2021complexity}. These polytopes all have exponentially many (as a function of $n$) facets.

In this work, we show that a specific family of polytopes contained in $[0,1]^n$ and defined by $O(n)$ constraints has $\Omega(n)$ split rank.
We also consider generalizations of split cuts, namely $t$-branch split cuts \cite{li2008cook} and $t$-dimensional lattice cuts \cite{dash2020lattice} and their corresponding closure operators. We show that the above family of polytopes has linear $t$-dimensional lattice rank and $t$-branch split rank, for a fixed $t$, and obtain the result on split rank as a consequence.

These polytopes were studied in the context of other families of cutting planes in \cite{buresh2003rank}, namely Gomory-Chv\'atal (GC) cuts, and the matrix cuts based on the $N_0, N,$ and $N_+$ \cite{lovschrijver} operators.
We call them {\em Tseitin polytopes} and define them as follows.
Let $G = (V,E)$ be an undirected graph and for each vertex $u \in V$ let $N(u) = \{v \in V: uv \in E\}$.
\begin{definition}\label{ts-poly}
The \emph{Tseitin polytope} of a graph $G=(V,E)$, denoted by $P_{TS(G)}$, is contained in $[0,1]^{|E|}$ and described by the inequalities:
\begin{eqnarray*} &\sum_{v \in N(u) \setminus F} x_{uv} + \sum_{v \in F} (1 - x_{uv}) \geq 1 \quad &\forall~u \in V, \forall~F \subseteq N(u) \text{ with } |F| \text{ even},\\
&0 \leq x_{uv} \leq 1 &\forall~ uv \in E. 
\end{eqnarray*}
\end{definition}
The integer hull of a Tseitin polytope is known to be empty when $|V|$ is odd (we will explain this later).  If $G$ is $d$-regular, where $d$ is a constant, the polytope $P_{TS(G)}$ is described by $O(|V| \cdot 2^d) = O( |E| \cdot 2^d) = O(|E|)$ constraints, since $|E| = \frac{d}{2}|V|$. It follows from the work of \cite{buresh2003rank} that there is a constant $d$ such that for large enough odd values of $n$, there are $d$-regular graphs $G = (V,E)$ with $|V| = n$ such that $P_{TS(G)}$ is defined by $O(n)$ variables and inequalities, but has Chv\'atal rank (or $N$-rank or $N_+$-rank) $\Omega(n)$.  

Using the same family of polytopes, we prove that for any constant value of $t \geq 1$, there is some positive integer $n_0$ such that for all odd $n > n_0$, there exists a $20t$-regular graph with $n$ nodes and $10nt$ edges such that $P_{TS(G)}$ has $t$-lattice rank $\Omega(n)$.
Setting $t=1$, we get the desired lower bound on split rank.
This result on linear split rank also provides the first separation between depths of general branch-and-bound proof trees and split rank for binary problems. The former is always less than or equal to the latter. It was shown in \cite{beame2018stabbing} that, for all graphs $G$ with maximum degree $d$ and odd $|V|$, there is a general branch-and-bound proof (see Sections 1 and 2 of \cite{dey2021lower} for a formal definition) of the integer infeasibility of $P_{TS(G)}$ of depth at most $O(d + \log^2 |V|)$. In the situation where our split rank result holds ($n$ odd, $t=1$, $d = 20$), the depth of the branch-and-bound proof is at most $O(\log^2 n)$. 
Here we use the phrase "general branch-and-bound" to indicate that the branch-and-bound proof branches on general linear inequalities and not simply on variable bounds.   

A natural question is whether our lower bound on split rank can be tightened from $cn$ to $n - k$ where $0 < c < 1$ and $k$ is a nonnegative integral constant. We note that, for any constant $k$, there cannot be an integer infeasible polytope $P \subseteq [0,1]^n$ with $\poly(n)$ facets and split rank $n - k$. The fact that any such polytope would need at least $2^{n-k}$ facets follows directly from the proofs of Proposition 1 in \cite{eisenbrand1999bounds} and Proposition 4.3 in \cite{dash2001matrix}.
Therefore, to tighten our result, one would need to consider polytopes with nonempty integer hull.

The infeasible Tseitin polytopes also have linear rank with respect to a number of other operators/cutting planes: the {\em M-cuts} of Dunkel and Schulz \cite{ds}, 
the {\em two-halfspace closure} introduced by Basu and Jiang \cite{basu2020two}, and {\em semantic cutting planes} \cite{bpr}. It was shown in \cite{basu2020two} that the two-halfspace rank is greater than or equal to half the split rank. Therefore the linear rank result for the two-halfspace closure can be derived from our result on split rank.
The linear rank result for M-cuts follows from a result on the rank of semantic cutting planes in Fleming et. al. \cite{fleming2021power}. We discuss these results in detail in Section \ref{sec:bounds}.

In Section \ref{sec:prelim} we introduce the necessary notation and relevant concepts, and formally state the main results. In Section \ref{sec:bounds} we prove tight upper and lower rank bounds on two generalizations of split closure. Finally in Section \ref{sec:proof} we prove the main result of this paper. 

\section{Preliminaries}\label{sec:prelim}


Split cuts and the associated split closure were studied by Cook, Kannan, and Schrijver\cite{split-closure}.
We next give the definition of the $t$-branch split closure of a polyhedron $P$, introduced by 
Li and Richard \cite{li2008cook}. When $t=1$, their definition coincides with the definition of split closure in \cite{split-closure}.
\begin{definition}
Given a positive integer $t$ and a polyhedron $P \subseteq \R^n$, the \emph{$t$-branch split closure} of $P$ is denoted by $tSC(P)$ and defined as
\[tSC(P) = \mathop{\bigcap_{\pi^1,...,\pi^t \in \mathbb{Z}^{n}}}_{\pi^1_0, \ldots, \pi^t_0 \in \Z} \conv \left(\bigcap_{j = 1}^{t} \left(\{x \in P : \ip{\pi^j}{x} \leq \pi_0^j  \} \cup \{x \in P : \ip{\pi^j}{x} \geq \pi_0^j + 1 \}\right)\right)\]
\end{definition}
For convenience, we will let $SC(P)$ stand for $1SC(P)$.
We define $tSC^k(P)$ and $t$-branch split rank in a manner analogous to the definition of $SC^k(P)$ in (\ref{eq:sck}) and the split rank of a polyhedron $P$.
When the outer intersection above is taken over a single list of $t$ vectors , say $L = (\bar\pi^1,...,\bar\pi^t)$, and one list of $t$ integers $L_0 = (\bar\pi^1_0, \ldots, \bar\pi^t_0)$, we say that the resulting polyhedron is obtained by applying the {\em $t$-branch split disjunction} defined by $L$ and $L_0$ to $P$. 

Polyhedra with infinite $t$-branch split rank (with respect to an appropriately defined mixed-integer hull) were given by Li and Richard for $t=2$, and for all $t > 2$ by Dash and G\"unl\"uk \cite{dash2013t}. However, there is essentially no analysis of the $t$-branch split rank for pure integer problems, especially those defined by binary variables.

Cook, Kannan and Lov\'asz \cite{split-closure} gave an alternative definition of the split closure that is equivalent to the one above, and this was generalized by Dash, Dey, and G\"unl\"uk \cite{dash2011mixed} as follows.
\begin{definition}
Given a positive integer $t$ and a polyhedron $P \subseteq \R^n$, the \emph{$t$-dimensional lattice closure} of $P$ is denoted by $tLC(P)$ and defined as
$$tLC(P) = \bigcap_{\pi^1,...,\pi^t \in \mathbb{Z}^n} \conv \left(\bigcap_{j = 1}^{t} \{x \in P: \ip{\pi^j}{x} \in \mathbb{Z} \}\right).$$
\end{definition}

Let $t$ be a positive integer. Let $\pi^1,...,\pi^t \in \mathbb{Z}^{n}$ and let $\pi^1_0, \ldots, \pi^t_0 \in Z$.
Then for all $ 1\leq j \leq t$ we have
\[\{x \in \mathbb{R}^n : \ip{\pi^j}{x} \in \mathbb{Z} \} \subseteq \{x \in \mathbb{R}^n : \ip{\pi^j}{x} \leq \pi_0^j  \} \cup \{x \in \mathbb{R}^n : \ip{\pi^j}{x} \geq \pi_0^j + 1 \}.\]
Therefore, for any polyhedron $P \subseteq \R^n$ we have
\begin{equation} \label{tlcintsc}
tLC(P) \subseteq tSC(P).
\end{equation}
 Once again we define $tLC^k(P)$ and $t$-dimensional lattice rank in a manner similar to $SC^k(P)$ and split rank.
\begin{definition}\label{def:ts_cont}
The \emph{Tseitin contradiction} for a graph $G = (V, E)$ with odd $|V|$, denoted $TS(G)$, is the following: given a boolean variable $x_{uv}$ for each edge $uv \in E$, there exists no 0-1 assignment to all the variables satisfying
\begin{equation}\label{tsei-contr}  \sum_{v \in N(u)} x_{uv} \equiv 1\modd \quad \forall u \in V. \end{equation}
\end{definition}
We refer to an equation of the form (\ref{tsei-contr}) as an {\em odd parity} equation. Any solution of (\ref{tsei-contr}) would imply that there is a subgraph $G'$ of $G$, with vertex set equal to $V$ and edges corresponding to nonzero $x_{uv}$ values, such that every vertex in $G'$ has odd degree. Then the sum of vertex degrees would be odd for $G'$, which is not possible.
Note that $\bar x \in \{0,1\}^E$ is a solution of (\ref{tsei-contr}) if and only $\bar x \in P_{TS(G)}$. This is because the first set of constraints in Definition~\ref{ts-poly} prevent any solutions that have an even number of ones assigned to edges incident to a node $u$.

To state our main result, we need the following definition.
\begin{definition}
Let $G = (V,E)$ be a graph. For any two disjoint subsets of vertices $V_1, V_2$, let $e_G(V_1,V_2)$ denote the number of edges with one vertex in each of $V_1$ and $V_2$. Then, the \emph{edge expansion} of $G$ is 
$$\min_{S \subseteq V, |S| \leq \frac{|V|}{2}} \frac{e_{G}(S, V \setminus S)}{|S|}.$$
\end{definition}
When the graph $G$ is clear from the context, we will drop the subscript $G$ from $e_G$.
The following result can be viewed as a generalization of the similar result for Chv\'atal rank in \cite{buresh2003rank}; we use the same graphs and similar proof techniques.
\begin{thm}\label{thm:tdim_lb}
If $|V|$ is odd and $G = (V, E)$ is a graph with edge expansion $c > t + 1$, then $P_{TS(G)}$ has $t$-dimensional lattice rank at least $\frac{c-(t + 1)}{2t}|V|$. If $G$ is $d$-regular, then the $t$-dimensional lattice rank is at least $\frac{c-(t + 1)}{dt}|E|$. 
\end{thm}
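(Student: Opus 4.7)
The plan is to adapt the Chv\'atal-rank lower bound argument for Tseitin polytopes from \cite{buresh2003rank} to the stronger $t$-dimensional lattice closure. My test point is the cube center $\bar x := \tfrac12 \mathbf{1} \in [0,1]^E$. Each Tseitin constraint evaluates to $d(u)/2$ on $\bar x$, and since the edge-expansion $c$ forces every vertex degree $d(u)$ to be at least $c > t+1 \geq 2$, we have $\bar x \in P_{TS(G)}$. The goal is to show $\bar x \in tLC^k(P_{TS(G)})$ for every $k \leq \tfrac{c-(t+1)}{2t}|V|$, by induction on $k$. A useful observation is that when every vertex of $G$ has even degree (as in the $20t$-regular case used in the main application), $P_{TS(G)}$ is preserved by the involution $y \mapsto \mathbf{1}-y$ and $\bar x$ is its unique fixed point; in this setting, showing $\bar x \in tLC^k$ reduces to showing that for every lattice-disjunction $\pi^1,\ldots,\pi^t \in \Z^E$, the polytope $tLC^{k-1}(P_{TS(G)})$ contains some $y$ with $\pi^j y \in \Z$ for all $j$, since then $\mathbf{1}-y$ has the same integrality property and $\bar x$ is their midpoint.

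The central combinatorial tool is a family of 0/1 ``$T$-join certificates'' $\chi^W \in \{0,1\}^E$ indexed by odd-size subsets $W \subseteq V$: $\chi^W$ violates the Tseitin parity exactly at $W$ (it exists because $|V \setminus W|$ is even, as $|V|$ is odd), and edge-expansion lets me pick $\chi^W$ supported on an $O(|W|)$-sized neighborhood of $W$. Although $\chi^W \notin P_{TS(G)}$, appropriate convex combinations of complement-pairs built from the $\chi^W$'s do lie in $P_{TS(G)}$ and, I claim, in its successive closures. The invariant I maintain is that for every odd $W$ with $|W| \leq 2tk+1$, the certificate $\chi^W$ serves as a building block for a point in $tLC^k(P_{TS(G)})$, and singletons $W = \{v\}$ suffice to reconstruct $\bar x$.

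The inductive step proceeds by an edge-toggling argument. Fix a lattice-disjunction $\pi^1,\ldots,\pi^t$. For each $j$ I locate an edge $e_j \in \delta(W)$ with $\pi^j_{e_j} \neq 0$; toggling $\chi^W$ on $e_j$ shifts $\pi^j \chi^W$ by $\pi^j_{e_j}$ while flipping parity at the two endpoints of $e_j$. Iterating over $j$ with the $2^t$ possible sign-choices of the toggles produces a family of $T$-join certificates $\chi^{W'}$ (with $W \subseteq W'$, $|W' \setminus W| \leq 2t$), each with all $\pi^j$-projections integer, whose average recovers $\chi^W$. Each closure round thus enlarges $|W|$ by at most $2t$ vertices.

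The main obstacle is the bookkeeping: the edges $e_1,\ldots,e_t$ must be chosen pairwise disjoint (for clean parity accounting and inductively compatible certificates) and must all lie on $\delta(W)$ (to preserve concentration of support). The edge-expansion bound $|\delta(W)| \geq c|W|$ provides the slack -- of the at least $c|W|$ boundary edges, at most $(t+1)|W|$ can be ``blocked'' by having all $\pi^j$-coefficients equal to zero or by having been used in prior toggles, leaving a positive surplus as long as $c > t+1$. This, combined with the $2t$-per-round growth of $|W|$ and the domain restriction $|W| \leq |V|/2$ of the expansion bound, yields rank at least $\tfrac{c-(t+1)}{2t}|V|$; the regular-graph bound in $|E|$ follows by substituting $|E| = \tfrac{d}{2}|V|$.
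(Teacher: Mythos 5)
Your approach has a fatal structural gap: the building blocks $\chi^W$ are 0--1 points that violate the Tseitin parity at every vertex of $W$, hence violate the defining inequalities of $P_{TS(G)}$ and lie outside $P_{TS(G)}$ and outside every closure $tLC^{k}(P_{TS(G)})$. To certify $\bar x \in tLC(Q)$ with $Q = tLC^{k-1}(P_{TS(G)})$ you must write $\bar x$ as a convex combination of points that are simultaneously \emph{in $Q$} and have integral $\pi^j$-projections; points outside $Q$ cannot appear in such a combination no matter how they average. So the invariant ``$\chi^W$ is a building block for a point in $tLC^k$'' never produces the objects the definition demands, and your symmetry reduction only discharges the outermost round: for a general $y \in tLC^{k-1}(P_{TS(G)})$ (rather than $\bar x = \frac{1}{2}\ones$) the map $y \mapsto \ones - y$ gives no recursion. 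Relatedly, the edge-toggling step is vacuous as stated: $\chi^W$ is integral, so $\pi^j \chi^W \in \Z$ automatically for every $\pi^j \in \Z^E$; the entire difficulty of the lattice closure is forcing integrality of the projections of a \emph{fractional} point, which toggling 0--1 coordinates never addresses.

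The paper's proof avoids this by working exclusively with half-integral points inside $P_{TS(G)}$. The engine is the $t$-rounding property (Theorem~\ref{thm:t_latt_cl}): an $\mathbb{F}_2$ argument (Lemma~\ref{lem:lin_alg}) produces, for each disjunction $\pi^1,\ldots,\pi^t$, a set $J$ of at most $t$ fractional coordinates such that rounding all of $J$ down or all of $J$ up makes every projection integral, exhibiting $x$ as the midpoint of $x^{(J,0)}$ and $x^{(J,1)}$. The rank bound is then organized as a certificate tree: ``red'' steps apply this rounding property (consuming one closure round and fixing at most $t$ edges), while ``blue'' steps rewrite a point as the average of 0--1 solutions of the local \emph{odd}-parity system on a low-expansion set $U$ (Lemmas~\ref{lem:avg_of_sols} and~\ref{lem:exist_sol}) --- these solutions satisfy, rather than violate, the parity constraints at $U$, so the resulting half-integral points remain in $P_{TS(G)}$. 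The edge-expansion count is global: the accumulated blue sets form an $S$ with $|S| = |V|/2$, so more than $c|S|$ cut edges get fixed overall, at most $(t+1)|S|$ of them by blue steps, forcing at least $\frac{c-(t+1)}{2t}|V|$ red steps. To salvage your plan you would need to replace $\chi^W$ by points that are $\frac{1}{2}$ on a shrinking expander core and integral with correct parity outside it --- which is essentially the paper's construction.
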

The bound in Theorem \ref{thm:tdim_lb} also holds for the $t$-branch split rank by Equation \ref{tlcintsc}. When $t=1$, this gives a lower bound on split rank for Tseitin polytopes. As GC cuts are a subclass of split cuts, the Chv\'atal rank is at least as much as the split rank. 

\begin{cor}\label{cor:split_lb}
If $|V|$ is odd and $G = (V, E)$ is a graph with edge expansion $c > 2$, then $P_{TS(G)}$ has split rank at least $\frac{c - 2}{2} |V|$. If $G$ is $d$-regular, then the split rank is at least $\frac{c - 2}{d} |E|$. 
\end{cor}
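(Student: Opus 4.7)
The plan is to derive the corollary as the case $t=1$ of Theorem \ref{thm:tdim_lb}, combined with the inclusion (\ref{tlcintsc}). Setting $t=1$, the hypothesis $c > t+1$ becomes $c > 2$, and Theorem \ref{thm:tdim_lb} yields that $P_{TS(G)}$ has $1$-dimensional lattice rank at least $\frac{c-2}{2}|V|$, or at least $\frac{c-2}{d}|E|$ when $G$ is $d$-regular (using $|E| = \frac{d}{2}|V|$, so the two formulations are consistent).

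To transfer this to the split rank, I would invoke the inclusion $1LC(Q) \subseteq 1SC(Q) = SC(Q)$ for every polyhedron $Q$, i.e., (\ref{tlcintsc}) at $t=1$ together with the paper's convention that $SC = 1SC$. Both closure operators are monotone under set inclusion, since a cutting plane valid for a larger set remains valid for any subset. A short induction on $k$ then yields $1LC^k(P) \subseteq SC^k(P)$ for every $k \geq 0$: given the inclusion at level $k-1$, monotonicity of $1LC$ gives $1LC^k(P) \subseteq 1LC(SC^{k-1}(P))$, and applying (\ref{tlcintsc}) at the outer operator gives $1LC(SC^{k-1}(P)) \subseteq SC(SC^{k-1}(P)) = SC^k(P)$.

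Consequently, if the split rank of $P_{TS(G)}$ were at most some $k$, then $1LC^k(P_{TS(G)}) \subseteq SC^k(P_{TS(G)}) = (P_{TS(G)})_I = \emptyset$, using that the integer hull is empty for odd $|V|$. This would force the $1$-dimensional lattice rank to be at most $k$ as well. Contrapositively, the split rank is at least the $1$-dimensional lattice rank, and both bounds of the corollary follow directly from the corresponding bounds in Theorem \ref{thm:tdim_lb}. There is essentially no obstacle here, since the corollary is an immediate specialization of Theorem \ref{thm:tdim_lb}; the only routine work is the monotone-induction step that lifts the one-step containment $1LC \subseteq SC$ to the iterated containments $1LC^k \subseteq SC^k$.
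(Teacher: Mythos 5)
Your proposal is correct and matches the paper's own derivation: the paper obtains this corollary by setting $t=1$ in Theorem \ref{thm:tdim_lb} and invoking the containment (\ref{tlcintsc}) to pass from $t$-dimensional lattice rank to ($t$-branch) split rank. The only difference is that you spell out the routine monotone induction lifting $1LC(Q) \subseteq SC(Q)$ to $1LC^k(P) \subseteq SC^k(P)$, which the paper leaves implicit.
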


In order to get a linear lower bound on $t$-dimensional lattice rank, it suffices to have $c$ and $d$ as constants independent of $|V|$. The following result from \cite{ellis2011expansion} yields this property.
\begin{prop}  \label{prop:ellis}
For any $d$, there exists an $n_0$ such that almost all $d$-regular graphs on at least $n_0$ vertices have edge expansion at least $0.18d$. 
\end{prop}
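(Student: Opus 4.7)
The statement is a classical concentration fact about random regular graphs, and the natural route (due originally to Bollob\'as) is via the configuration model. The plan is to assign $d$ half-edges to each of the $n$ vertices and then draw a uniformly random perfect matching on the resulting $dn$ half-edges, producing a random multigraph $G$. Conditioning on $G$ being simple is an event of probability bounded below by a positive constant depending only on $d$, so it suffices to prove that in the pairing model, the edge expansion is at least $0.18 d$ with probability $1 - o(1)$.

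First I would fix a subset $S \subseteq V$ of size $s \leq n/2$ and let $X_S$ count the matched pairs whose two half-edges both lie in $S$. Since each of the $ds$ half-edges incident to $S$ is matched either internally to $S$ or to a half-edge of $V \setminus S$, one has $e(S, V \setminus S) = ds - 2 X_S$, so the bad event $e(S, V \setminus S) < 0.18 \, ds$ coincides with $X_S > 0.41 \, ds$. A direct counting argument in the pairing model gives
\[
\Pr[X_S \geq k] \;\leq\; \binom{ds}{2k}\,(2k-1)!!\,\frac{(dn - 2k - 1)!!}{(dn - 1)!!},
\]
and Stirling's approximation reduces this to a bound of the form $(C_d \cdot s/n)^k$ for an explicit constant $C_d$ depending only on $d$. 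I would then apply the union bound over all $\binom{n}{s}$ choices of $S$ of size $s$ and sum over $s \in \{1, \ldots, \lfloor n/2 \rfloor\}$, arriving at a total probability of order $\sum_{s} (en/s)^{s} \, (C_d \cdot s/n)^{0.41\, ds}$, which must be shown to tend to zero.

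The main technical obstacle is the regime where $s$ is a constant fraction of $n$: here $\binom{n}{s}$ can be as large as $2^{n}$, so the matching-internal tail must decay at a sufficiently strong exponential rate in $n$ to compensate. This forces one to use the sharp pairing-model estimate rather than a na\"ive binomial approximation, and the constant $0.18\, d$ is deliberately chosen well below the true expansion threshold of $\bigl(1/2 - o(1)\bigr)\, d$ in order to leave enough slack for this delicate balancing. The detailed tail computation and the verification that the resulting sum over $s$ is $o(1)$ is precisely what is carried out in \cite{ellis2011expansion}.
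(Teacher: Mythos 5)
The paper gives no proof of this proposition: it is quoted directly from \cite{ellis2011expansion} and used as a black box, so there is no internal argument to compare yours against. That said, your sketch correctly identifies the route taken in that reference: pass to the configuration model (conditioning on simplicity costs only a constant factor depending on $d$), write $e(S, V\setminus S) = ds - 2X_S$ where $X_S$ counts internally matched pairs, bound the tail of $X_S$ by the first-moment estimate $\binom{ds}{2k}(2k-1)!!\,(dn-2k-1)!!/(dn-1)!!$, and take a union bound over subsets $S$ and sizes $s$. The translation of the bad event into $X_S > 0.41\,ds$ is also correct.

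As a proof, however, the proposal stops exactly where the work begins. You assert that Stirling reduces the tail to $(C_d\, s/n)^k$ and that the sum $\sum_s \binom{n}{s}(C_d\, s/n)^{0.41 ds}$ is $o(1)$, but you carry out neither computation and explicitly defer both to \cite{ellis2011expansion}. The decisive case is $s = \Theta(n)$, where $\binom{n}{s}$ is exponential in $n$ and one must check that the exponent $0.41\,d$ actually beats the entropy term for the specific constant $0.18$; this is a concrete inequality that has to be verified, not just gestured at, and it is also where the implicit restriction on $d$ enters (as stated, the proposition fails for $d \le 2$: almost all $2$-regular graphs are disconnected and have edge expansion $0$, though the paper only ever uses $d = 20t$). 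So your write-up is a faithful gloss on the citation, matching the cited source's method, but it is not a self-contained proof; if the intent was to reproduce the argument rather than re-cite it, the tail estimate and the summation over $s$ must be done explicitly.
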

The above result implies that for any constant $t$ and sufficiently large odd $n$, almost all $20t$-regular graphs with $n$ nodes have edge expansion at least $3.6t$. Therefore, their corresponding Tseitin polytopes, defined on $|E| = 10tn$ variables, have $O(|E|)$ facets, empty integer hull, and $t$-dimensional lattice rank $\Omega(|E|)$. 
\begin{cor}\label{cor:main-cor}
For any $t$, there exists an $n_0$ such that for all $n > n_0$, there exist polytopes $P \subseteq [0,1]^{n}$ with $O(n)$ facets such that the split rank of $P$ is $\Omega(n)$. 
\end{cor}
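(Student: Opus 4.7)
The plan is to combine the split-rank lower bound from Corollary \ref{cor:split_lb} with the existence of good expander graphs guaranteed by Proposition \ref{prop:ellis}, and then handle arbitrary ambient dimensions by a simple padding argument. The corollary is essentially a packaging step: all the hard work has been done, and here we just have to turn a statement about Tseitin polytopes on specific graphs into a statement about polytopes in $[0,1]^n$ for every sufficiently large $n$.

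First, I would fix $d = 20$ in Proposition \ref{prop:ellis}. This yields an $m_0$ such that for every $m \geq m_0$, almost all $20$-regular graphs on $m$ vertices have edge expansion at least $0.18 \cdot 20 = 3.6$; in particular, for every sufficiently large odd $m$, one can pick a $20$-regular graph $G_m = (V, E)$ with $|V| = m$, $|E| = 10m$, and edge expansion $c \geq 3.6 > 2$. Then I would invoke Corollary \ref{cor:split_lb}: the Tseitin polytope $P_{TS(G_m)} \subseteq [0,1]^{10m}$ has empty integer hull (since $m$ is odd) and split rank at least $\tfrac{c-2}{d}|E| \geq \tfrac{1.6}{20} \cdot 10m = 0.8\,m$, while its defining system has at most $m \cdot 2^{d-1} + 2|E| = O(m) = O(|E|)$ inequalities because $d$ is a constant.

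Next, to pass from ambient dimensions of the form $10m$ (with $m$ odd) to all sufficiently large $n$, I would use a padding construction. For any $n$ large enough, choose the largest odd integer $m$ with $10m \leq n$, so that $n - 10m < 20$. Let $P' = P_{TS(G_m)} \subseteq [0,1]^{10m}$ and define
\[
P = P' \times \{0\}^{n - 10m} \subseteq [0,1]^n.
\]
Adding the fixed coordinates contributes at most $2(n-10m) \leq 38$ additional facets, so $P$ still has $O(m) = O(n)$ facets, and $P_I = \emptyset$ since $P'_I = \emptyset$. The split-closure operator commutes with appending fixed integer coordinates: any split disjunction on $\R^n$ restricts to a split disjunction on $\R^{10m}$ after setting the last coordinates to zero, and conversely any split disjunction on $\R^{10m}$ lifts by padding the $\pi$ vector with zeros. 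Hence $SC^k(P) = SC^k(P') \times \{0\}^{n-10m}$ for every $k$, and the split rank of $P$ equals that of $P'$, which is $\Omega(m) = \Omega(n)$.

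There is no real obstacle here; the only subtlety is verifying the padding claim, which amounts to noting that disjunctions $\pi x \leq \pi_0$ vs.\ $\pi x \geq \pi_0 + 1$ with integer $\pi$ behave identically on the two representations once the pinned coordinates are substituted out. Given this, the conclusion of Corollary \ref{cor:main-cor} follows by taking $n_0$ to be the threshold needed for Proposition \ref{prop:ellis} at $d = 20$, enlarged if necessary so that $n_0/10$ exceeds any odd value required to make $0.8\,m$ a genuinely linear lower bound.
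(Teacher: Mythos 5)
Your proposal is correct and follows essentially the same route as the paper: instantiate Proposition~\ref{prop:ellis} with $d=20$ to get $20$-regular expanders on odd vertex sets, apply Corollary~\ref{cor:split_lb} to the corresponding Tseitin polytopes, and handle ambient dimensions that are not of the form $10m$ by embedding into $[0,1]^n$. The paper's own justification is exactly this reduction (choosing the largest admissible dimension $n'' = \Theta(n)$ below $n$); you merely spell out the padding step, which the paper leaves implicit, more carefully.
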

Note that the discussion before Corollary \ref{cor:main-cor} refers only to polytopes contained in $[0,1]^{10tn}$, and not $[0,1]^n$ where $n$ is not a multiple of $10t$. However, it is not hard to obtain Corollary~\ref{cor:main-cor} for each $n$. Consider any $n$ that is not a multiple of $10t$ and let $n'' < n$ be the maximum integer such that $n''$ is a multiple of $10t$ (so $n'' > n - 10t$, and therefore $n'' = \Theta(n)$). Then, by the discussion above, we know that there is a polytope $P \subseteq [0,1]^{n''} \subseteq [0,1]^{n}$ with $O(n'') = O(n)$ facets and split rank $\Omega(n'') = \Omega(n)$.


\section{Bounds on rank}\label{sec:bounds}

We start off by giving some easy upper bounds on the t-branch split rank and $t$-dimensional lattice rank of a polytope $P \subseteq [0,1]^n$.
\begin{thm}
Let $P \subseteq [0,1]^n$ be a polytope and let $0 < t \leq n$ be a positive integer. Then, the $t$-branch split rank  of $P$ is at most $\lceil \frac{n}{t} \rceil$, and so is the $t$-dimensional lattice rank. 
\end{thm}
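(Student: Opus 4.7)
The plan is to reduce to a cube-like argument in the spirit of Balas's upper bound for split rank: a single $t$-branch split (or $t$-dimensional lattice) disjunction can fix $t$ coordinates to $\{0,1\}$ values inside any polytope contained in $[0,1]^n$, so $\lceil n/t \rceil$ rounds of closure suffice to reach $P_I$. First I would partition the coordinates $\{1,\ldots,n\}$ into blocks $S_1,\ldots,S_{\lceil n/t \rceil}$, each of size at most $t$, and at round $k$ apply the disjunction with $\pi^j = e_i$ for the $i \in S_k$, $\pi^j_0 = 0$ (padding the list with zero vectors if $|S_k| < t$). Because $Q \subseteq [0,1]^n$ at every stage, $\pi^j x \leq 0$ or $\pi^j x \geq 1$ forces $x_i \in \{0,1\}$, and exactly the same thing happens for the $t$-dimensional lattice disjunction using these $\pi^j$, since $x_i \in \mathbb{Z} \cap [0,1] = \{0,1\}$. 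So both closures contract $Q$ into $\conv\!\bigl(\bigcup_{v \in \{0,1\}^{S_k}} \{x \in Q : x_{S_k} = v\}\bigr)$.

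The main technical step is an induction on $k$ showing that after $k$ rounds, the polytope $Q_k$ satisfies
\[
Q_k \;\subseteq\; \conv\Bigl(\bigcup_{v \in \{0,1\}^{T_k}} \{x \in P : x_{T_k} = v\}\Bigr), \qquad T_k := S_1 \cup \cdots \cup S_k.
\]
The base case $k=0$ is trivial. For the inductive step the key subclaim (and the only subtle point in the whole argument) is that for disjoint coordinate sets $T$ and $S$, any $v \in \{0,1\}^S$, and faces $F_w = \{x \in P : x_T = w\}$ with $w \in \{0,1\}^T$, one has
\[
\conv\Bigl(\bigcup_{w} F_w\Bigr) \cap \{x : x_S = v\} \;\subseteq\; \conv\Bigl(\bigcup_w \bigl(F_w \cap \{x : x_S = v\}\bigr)\Bigr).
\]
This holds because any representation $\sum_w \lambda_w z_w$ with $z_w \in F_w \subseteq [0,1]^n$ and $\sum_w \lambda_w (z_w)_S = v \in \{0,1\}^S$ forces, coordinate by coordinate in $S$, each summand with $\lambda_w > 0$ to already satisfy $(z_w)_S = v$. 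Combining this subclaim with the one-round contraction above upgrades the hypothesis from $T_{k-1}$ to $T_k$.

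Finally, after $\lceil n/t \rceil$ rounds we have $T_k = \{1,\ldots,n\}$, so the right-hand side becomes $\conv(P \cap \{0,1\}^n) = P_I$; combined with the opposite inclusion $P_I \subseteq Q_k$ that holds trivially for any closure operator, this yields $Q_k = P_I$ and hence the desired bound on both the $t$-branch split rank and the $t$-dimensional lattice rank. I do not expect any real obstacle beyond the intersection-of-convex-hulls subclaim above, which is essentially a consequence of the extreme-point characterization of $\{0,1\}$-values within $[0,1]$.
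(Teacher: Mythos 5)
Your proposal is correct and follows essentially the same route as the paper: partition the coordinates into $\lceil n/t\rceil$ blocks of size at most $t$, apply the elementary disjunctions $x_i\le 0$ vs.\ $x_i\ge 1$ (equivalently $x_i\in\Z$) on each block in turn, and conclude after all blocks are processed. Your explicit induction and the face-intersection subclaim simply spell out the step the paper states without proof, namely that $((P^1)\cdots)^{\lceil n/t\rceil}\subseteq\conv(P\cap\{0,1\}^n)$.
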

\begin{proof}
Let $P$ and $t$ satisfy the conditions of the Lemma. Partition the ordered list $(1 \dots, n)$ into $\lceil \frac{n}{t} \rceil$ lists so that the $k$th list $G_k = ( kt + 1,..., \min((k+1)t, n) \,)$ for $k = 0,...,\lceil \frac{n}{t} \rceil - 1$. Let $e^j$ denote the $j$-th unit vector. For any polytope $Q \subseteq [0,1]^n$, let
$$Q^k = \conv \left(\bigcap_{j \in G_k} \left(\{x \in Q : \ip{e^j}{x} \leq 0  \} \cup \{x \in Q: \ip{e^j}{x} \geq 1 \}\right)\right).$$
One can see that $Q^k$ is obtained by applying the $t$-branch split disjunction defined by $L_k = (e^j : j \in G_k)$ and $L'_k = (0, \ldots, 0)$ to $Q$.
By definition we have $tSC(P) \subseteq P^k$. Also, observe that $P^k \subseteq \conv(\{x \in P : x_j \in \{0,1\} \; \forall \; j \in G_k\})$. From this we can conclude that $((P^1)...)^{\lceil \frac{n}{t} \rceil} \subseteq \conv( P \cap \{0,1\}^n)$ and the first part of the result follows. As $tLC(P) \subseteq tSC(P)$, the second part of the result follows.
\end{proof}

Given $x \in \{0,\frac{1}{2}, 1\}^n$, let $E(x)$ be the set of indices where $x$ is fractional i.e., $E(x) = \{j \in \{1, \ldots, n\} : x_j = \frac{1}{2}\}$. For a set of indices $J \subseteq E(x)$, let $x^{(J,0)}$ and $x^{(J,1)}$ be defined as follows:
\[ x^{(J,0)}_i = \left\{ \begin{array}{ll} 0 & \text{ if } i \in J \\ x_i & \text{ otherwise} \end{array}\right. ~\text{ and }~ x^{(J,1)}_i = \left\{ \begin{array}{ll} 1 & \text{ if } i \in J \\ x_i & \text{ otherwise.} \end{array}\right.\]
If $j$ is a number from $\{1, \ldots, n\}$, we let $x^{(j,0)}$ stand for $x^{(\{j\}, 0)}$.
Let $\mathcal C$ be a family of cutting planes. We let $\mathcal{C}(P)$ stand for the set of points in $P$ that satisfy all cuts in $\mathcal{C}$, and we define $\mathcal{C}^k(P)$ in the same way we defined $SC^k(P)$. 
\begin{definition}
We say that a family of cutting planes $\mathcal{C}$ has the {\em t-rounding property} if for all $x \in \{0,\frac{1}{2}, 1\}^n$ with $|E(x)| \geq t$ the following holds:
if for all $J \subseteq E(x)$ with $|J| \leq t$ the points $x^{(J,0)}$ and $x^{(J,1)}$ are contained in a polytope $P \subseteq [0,1]^n$, then $x \in \mathcal{C}(P)$.
\end{definition}
The above definition says that whenever the points obtained by rounding $J$ fractional components of a half-integral point $x$ up to 1 or down to 0 are contained in $P$, then $x$ is contained in the closure of $P$ with respect to the family of cuts $\mathcal{C}$.  

The 1-rounding property is known to be true for GC cuts (Chv\'atal, Cook and Hartmann \cite{cch}) for $N_0, N,$ and $N_+$ cuts (Cook and Dash \cite{cd} and Goemans and Tun\c{c}el \cite{gt}), and for split cuts (Cornu\'ejols and Li \cite[Lemma 5]{cornujols2002rank}).
This property has been used to show that the closure operators associated with the above cut families all have rank at least $n$ for the following family of polytopes.
\begin{definition}\label{def-crop}
The $n$-dimensional \emph{cropped cube} is defined by the constraints
\begin{eqnarray*} && \sum_{i \in J}x_i + \sum_{i \not \in J}(1 - x_i) \geq \frac{1}{2} \quad \forall J \subseteq \{1,...,n\},\\
&&0 \leq x_i \leq 1 \quad \forall i \in \{1,...,n\}. 
\end{eqnarray*}
\end{definition}

We will next show that $t$-dimensional lattice cuts have the $t$-rounding property. We  will need the following basic property of the vector space  $\mathbb{F}_2^n$ where $\mathbb{F}_2$ is the finite field of integers modulo 2.
\begin{lemma}\label{lem:lin_alg}
Let $1 \leq t \leq n$, and let $\ones$ be the all-ones vector with $n$ components. Let $A \in \{0,1\}^{t \times n}$ be a matrix and let $b \in \{0,1\}^t$ be a vector such that $A\ones \equiv b\modd$. Then there exists a set $J \subseteq \{1, \ldots, n\}$  with $|J| \leq t$ such that $A_J \ones \equiv b\modd$, where $A_J$ is the submatrix of $A$ induced by column indices in $J$.
\end{lemma}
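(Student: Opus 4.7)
The plan is to work entirely in the vector space $\mathbb{F}_2^t$ and exploit its dimension. Denote by $a_1,\ldots,a_n \in \{0,1\}^t$ the columns of $A$, viewed as elements of $\mathbb{F}_2^t$. The hypothesis $A\ones \equiv b \modd$ simply says that $b = a_1 + a_2 + \cdots + a_n$ in $\mathbb{F}_2^t$. What we want to prove is that $b$ can be written as a sum (in $\mathbb{F}_2^t$) of at most $t$ of the columns.

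First I would observe that, since each $a_j$ lies in $\mathbb{F}_2^t$, the $\mathbb{F}_2$-span of $\{a_1,\ldots,a_n\}$ is a subspace $W$ of $\mathbb{F}_2^t$ of dimension $k \leq t$. Extract a basis of $W$ consisting of some subset of the columns, say $a_{j_1},\ldots,a_{j_k}$, where $k = \dim W \leq t$. (This is standard: starting from the spanning set of columns, greedily discard those that are linearly dependent on the previously kept ones.)

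Next, note that $b \in W$: indeed $b = \sum_{j=1}^n a_j$ is an $\mathbb{F}_2$-linear combination of the columns, hence lies in $W$. Because $a_{j_1},\ldots,a_{j_k}$ form a basis of $W$, there exist unique coefficients $\alpha_1,\ldots,\alpha_k \in \mathbb{F}_2$ with $b = \sum_{i=1}^k \alpha_i a_{j_i}$. Let
\[
J = \{\, j_i \,:\, 1 \leq i \leq k,\ \alpha_i = 1 \,\}.
\]
Then $|J| \leq k \leq t$, and $\sum_{j \in J} a_j \equiv b \modd$, which is exactly the statement $A_J \ones \equiv b \modd$.

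There is no real obstacle here; the lemma is essentially the observation that any vector in a subspace of $\mathbb{F}_2^t$ is a $\{0,1\}$-combination (equivalently, a subset sum) of a basis of that subspace, and a basis has size at most $t$. The only care needed is to choose the basis from among the columns of $A$ themselves (rather than an arbitrary basis of $W$) so that the resulting index set $J$ refers to actual columns of $A$.
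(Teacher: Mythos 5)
Your proof is correct. The paper actually states Lemma~\ref{lem:lin_alg} without providing a proof, so there is nothing to compare against; your argument --- that $b$ lies in the $\mathbb{F}_2$-span $W$ of the columns, that a basis of $W$ can be chosen from among the columns themselves with at most $t$ elements, and that the $\mathbb{F}_2$-coordinates of $b$ in that basis directly give the subset $J$ --- is the standard one and cleanly fills this gap. (An equally standard alternative: take a minimum-cardinality $J$ with $\sum_{j\in J}a_j = b$; if $|J|>t$ the corresponding columns are dependent, so a nonempty subset sums to zero and can be removed, contradicting minimality. Both routes are equally short.)
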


\begin{thm}\label{thm:t_latt_cl}
Let $t$ be a positive integer. The family of $t$-dimensional lattice cuts has the $t$-rounding property.
\end{thm}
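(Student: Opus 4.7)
The plan is to fix an arbitrary choice of integer vectors $\pi^1, \ldots, \pi^t \in \Z^n$ and prove that $x \in \conv\bigl(\bigcap_{j=1}^t \{y \in P : \ip{\pi^j}{y} \in \Z\}\bigr)$; taking the intersection over all such choices then yields $x \in tLC(P)$. The crucial observation is that because $x$ is half-integral with fractional support $E(x)$, each $\ip{\pi^j}{x}$ has fractional part $0$ or $\tfrac{1}{2}$, and the parity status is determined entirely by $\sum_{i \in E(x)} \pi^j_i \bmod 2$. This is what will let a tiny amount of linear algebra over $\mathbb{F}_2$ reduce the question to the combinatorial hypothesis of the $t$-rounding property.

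Guided by this, I would realize $x$ as the midpoint of two rounded half-integral points obtained by flipping a small index set $J \subseteq E(x)$. Define $A \in \{0,1\}^{t \times |E(x)|}$ by $A_{ji} = \pi^j_i \bmod 2$ for $i \in E(x)$, and let $b \in \{0,1\}^t$ have $b_j = 1$ precisely when $\ip{\pi^j}{x} \notin \Z$. By construction $A \ones \equiv b \pmod 2$. Since $|E(x)| \geq t$, Lemma~\ref{lem:lin_alg} applies and produces a set $J \subseteq E(x)$ with $|J| \leq t$ satisfying $A_J \ones \equiv b \pmod 2$, i.e., $\sum_{i \in J} \pi^j_i \equiv b_j \pmod 2$ for every $j$.

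For this $J$, the hypothesis of the $t$-rounding property gives $x^{(J,0)}, x^{(J,1)} \in P$. A direct computation yields $\ip{\pi^j}{x^{(J,0)}} = \ip{\pi^j}{x} - \tfrac{1}{2}\sum_{i \in J}\pi^j_i$ and $\ip{\pi^j}{x^{(J,1)}} = \ip{\pi^j}{x} + \tfrac{1}{2}\sum_{i \in J}\pi^j_i$, and the parity condition on $J$ ensures that in each case the fractional $\tfrac{1}{2}$ part of $\ip{\pi^j}{x}$ (which is present exactly when $b_j = 1$) is cancelled by $\tfrac{1}{2}\sum_{i \in J}\pi^j_i$, while integrality is preserved when $b_j = 0$. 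Hence $\ip{\pi^j}{x^{(J,0)}}, \ip{\pi^j}{x^{(J,1)}} \in \Z$ for every $j$, so both points lie in $\bigcap_{j=1}^t \{y \in P : \ip{\pi^j}{y} \in \Z\}$. Since $x = \tfrac{1}{2}(x^{(J,0)} + x^{(J,1)})$, this is the required convex combination.

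The main obstacle, and the reason Lemma~\ref{lem:lin_alg} is needed in the first place, is the bound $|J| \leq t$: naively matching a given parity vector $b$ could require up to $|E(x)|$ coordinates, which would be useless because the hypothesis of the $t$-rounding property only controls rounded points indexed by sets $J$ of size at most $t$. Lemma~\ref{lem:lin_alg} is exactly the statement that $t$ columns always suffice, and once that combinatorial linear-algebra fact is in hand the remainder of the argument is routine bookkeeping.
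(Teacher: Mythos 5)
Your proposal is correct and follows essentially the same route as the paper's proof: reduce the parity-matching question to $\mathbb{F}_2$ via the matrix of parities of the $\pi^j$ restricted to $E(x)$, invoke Lemma~\ref{lem:lin_alg} to find $J$ with $|J| \leq t$, and verify that $x^{(J,0)}$ and $x^{(J,1)}$ make every $\ip{\pi^j}{\cdot}$ integral so that $x$ is their midpoint. No gaps.
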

\begin{proof}
Let $1 \leq t \leq n$ and let $x \in \{0, \frac{1}{2}, 1\}^n$ with $|E(x)| \geq t$. Let $P \subseteq [0,1]^n$ be a polytope and assume that $x^{(J, 0)}$ and $x^{(J,1)}$ are both contained in $P$ for all $J \subseteq E(x)$. We will prove that $x \in tLC(P)$.

Consider an arbitrary collection of $t$ vectors $\pi^1,...,\pi^t \in \mathbb{Z}^n$.
Let the ordered list of components of $E(x)$ be $(k_1, k_2, \ldots, k_{|E(x)|})$.
Let $\Pi$ be the $t\times |E(x)|$ matrix with 0-1 components defined by 
$$\Pi_{ij} = \begin{cases}
0 & \text{ if } \pi^i_{k_j} \text{ is even}, \\
1 & \text{ if } \pi^i_{k_j} \text{ is odd}.
\end{cases}$$
In other words $\Pi_{ij} \equiv 2\pi^i_{k_j}\modd$.
Let $b \equiv \Pi \cdot \ones\modd$ be a 0-1 vector. Notice that $b_i$ indicates the fractionality of $\ip{\pi^i}{x}$, i.e., $b_i = 0$ if $\ip{\pi^i}{x} \in \Z$ and $b_i = 1$ otherwise. Equivalently, $b_i \equiv \pi^i(2x) \modd$. Now, let $J$ be the set of at most $t$ columns of $\Pi$ shown to exist in Lemma \ref{lem:lin_alg}, so that $b \equiv \Pi_J \ones \modd$. 

We will now show that $\ip{\pi^1}{x^{(J,0)}},...,\ip{\pi^t}{x^{(J,0)}} \in \Z$. Consider some $i \in \{1, \ldots, t\}$ and notice that
\[\ip{\pi^i}{x^{(J,0)}} = \ip{\pi^i}{x} - \frac{1}{2}\sum_{j \in J}\pi^i_j.\]
Assume $\ip{\pi^i}{x} \in \Z$ and $b_i = 0$. Then by Lemma \ref{lem:lin_alg}, we know $\sum_{j \in J}\pi^i_j \equiv 0 \modd$, and therefore $\frac{1}{2}\sum_{j \in J}\pi^i_j \in \mathbb{Z}$. So $\ip{\pi^i}{x^{(J,0)}} \in \mathbb{Z}$ as desired. Next assume $\ip{\pi^i}{x}$ is fractional or $b_i = 1$. By Lemma \ref{lem:lin_alg}, we know $\sum_{j \in J}\pi^i_j \equiv 1 \modd$, and therefore  $\ip{\pi^i}{x^{(J,0)}} \in \mathbb{Z}$ as desired.
One can similarly argue that 
$$\ip{\pi^i}{x^{(J,1)}} = \ip{\pi^i}{x} + \frac{1}{2}\sum_{j \in J}\pi^i_j \in \Z.$$

We have shown that $x^{(J,0)}, x^{(J,1)} \in \{x : \ip{\pi^1}{x},...,\ip{\pi^t}{x} \in \mathbb{Z} \}$. As we assumed that $x^{(J,0)}, x^{(J,1)} \in P$, it follows that $x \in \conv(x^{(J,0)}, x^{(J,1)}) \subseteq \conv(\{x \in P : \ip{\pi^1}{x},...,\ip{\pi^t}{x} \in \mathbb{Z} \})$. 

As the choice of $\pi^1, \ldots, \pi^t$ was arbitrary, we can conclude that
\[ x \in \bigcap_{\pi^1,...,\pi^t \in \mathbb{Z}^n} \conv(\{x \in P : \ip{\pi^1}{x},...,\ip{\pi^t}{x} \in \mathbb{Z} \}) = tLC(P).
\]
\end{proof}

\begin{thm}
Let $P \subseteq [0,1]^n$ be the cropped cube. Then the $t$-dimensional lattice rank of $P$ is at least $\lceil \frac{n}{t} \rceil$.
\end{thm}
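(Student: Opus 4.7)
The plan is to combine two simple observations: (i) the cropped cube $P$ contains all half-integral points in $\{0,\tfrac12,1\}^n$ that have at least one fractional coordinate, and (ii) by the $t$-rounding property (Theorem \ref{thm:t_latt_cl}), a half-integral point with many fractional coordinates survives one application of $tLC$, provided each of its roundings does. Iterating this, a half-integral point with enough fractional coordinates survives many applications, and in particular the all-halves point survives $\lceil n/t\rceil-1$ applications even though $P_I=\emptyset$.

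First I would verify observation (i). For any $x\in\{0,\tfrac12,1\}^n$ with some fractional coordinate $i^\star$, each cropped-cube constraint $\sum_{i\in J}x_i+\sum_{i\notin J}(1-x_i)\ge \tfrac12$ already has LHS $\ge \tfrac12$ from the single term indexed by $i^\star$ (which equals $\tfrac12$ whether $i^\star\in J$ or not). Hence $x\in P$. One should also note in passing that $P_I=\emptyset$: for any $\bar x\in\{0,1\}^n$, the choice $J=\{i:\bar x_i=0\}$ makes the LHS equal to $0$.

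The core of the proof is the following inductive claim.
\begin{claim*}
For every integer $k\ge 0$ with $1+kt\le n$,
\[
tLC^k(P)\ \supseteq\ \bigl\{\,x\in\{0,\tfrac12,1\}^n\ :\ f(x)\ge 1+kt\,\bigr\},
\]
where $f(x)=|E(x)|$ is the number of fractional coordinates of $x$.
\end{claim*}
The base case $k=0$ is exactly observation (i). For the inductive step, fix $x$ with $f(x)\ge 1+(k+1)t$. Then $|E(x)|\ge t$, so the hypothesis of the $t$-rounding property is applicable. For any $J\subseteq E(x)$ with $|J|\le t$, the rounded point $x^{(J,0)}$ (resp.\ $x^{(J,1)}$) has exactly $f(x)-|J|\ge 1+kt$ fractional coordinates, and so lies in $tLC^k(P)$ by the inductive hypothesis. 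Applying Theorem \ref{thm:t_latt_cl} to $Q=tLC^k(P)$ yields $x\in tLC(Q)=tLC^{k+1}(P)$.

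Finally, taking $k=\lceil n/t\rceil-1$, one checks that $1+kt\le n$ (the inequality $\lceil n/t\rceil-1\le (n-1)/t$ holds in both cases $t\mid n$ and $t\nmid n$). Thus the all-halves point $(\tfrac12,\ldots,\tfrac12)$, which has $f=n\ge 1+kt$, belongs to $tLC^{\lceil n/t\rceil-1}(P)$. Since $P_I=\emptyset$, this proves the $t$-dimensional lattice rank is at least $\lceil n/t\rceil$. The only subtle point is the bookkeeping in the inductive step: the ``$+1$'' in the threshold $1+kt$ is exactly what guarantees each rounding is not fully integral and therefore remains in $P$ (and inductively in $tLC^k(P)$); without this slack the argument would fail at the last rounding.
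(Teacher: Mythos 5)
Your proof is correct and follows essentially the same route as the paper: the same induction on $k$ with the threshold $|E(x)| \geq kt+1$, invoking the $t$-rounding property (Theorem \ref{thm:t_latt_cl}) at each step, and concluding that $\frac{1}{2}\ones \in tLC^{\lceil n/t\rceil - 1}(P)$. The extra details you supply (verifying that fractional half-integral points lie in the cropped cube, that $P_I = \emptyset$, and the arithmetic check $1+kt \le n$) are all accurate and only make explicit what the paper leaves implicit.
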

\begin{proof}
Let $x \in \{0,\frac{1}{2}, 1\}^n$. Observe if $|E(x)| \geq 1$ (i.e. $x$ has at least one fractional component), $x \in P$. For the sake of induction, assume that for some nonnegative integer $k$ we have $\{x \in \{0,\frac{1}{2}, 1\}^n : |E(x)| \geq kt + 1\} \subseteq tLC^k(P)$. Consider any $x$ such that $|E(x)| \geq (k+1)t + 1$. By the inductive hypothesis, for any subset $J \subseteq E(x)$ with size at most $t$, it holds that $x^{(J,0)}, x^{(J,1)} \in tLC^k(P)$. Applying Theorem \ref{thm:t_latt_cl}, we see that $x \in tLC^{k+1}(P)$. This implies $\frac{1}{2}\ones \in tLC^{\lceil \frac{n}{t} \rceil - 1}(P)$, and so the result follows.
\end{proof}
As $t$-branch split cuts are dominated by $t$-dimensional lattice cuts (see (\ref{tlcintsc})), we get the next result as an immediate consequence of the previous theorem.
\begin{cor}
Let $P \subseteq [0,1]^n$ be the cropped cube. Then the $t$-branch split rank of $P$ is at least $\lceil \frac{n}{t} \rceil$.
\end{cor}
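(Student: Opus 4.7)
The plan is to deduce the statement directly from the preceding theorem, which asserts that the $t$-dimensional lattice rank of the cropped cube is at least $\lceil n/t \rceil$, together with the one-step containment $tLC(Q) \subseteq tSC(Q)$ for every polyhedron $Q \subseteq \mathbb{R}^n$ recorded in equation (\ref{tlcintsc}). The only thing that really needs to be checked is that this single-step containment propagates through the iterated closures, that is, $tLC^k(P) \subseteq tSC^k(P)$ for every $k \geq 0$. Given this, any point witnessing that $tLC^{k}(P)$ strictly contains $P_I$ also witnesses the same for $tSC^{k}(P)$.

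To prove the propagation I would argue by induction on $k$. The base case $k = 0$ is immediate since both closures equal $P$. For the inductive step, suppose $tLC^{k-1}(P) \subseteq tSC^{k-1}(P)$. The $t$-branch split closure operator is monotone under inclusion, i.e., $Q_1 \subseteq Q_2$ implies $tSC(Q_1) \subseteq tSC(Q_2)$, as can be read off directly from its definition (each piece $\{x \in Q_i : \pi^j x \leq \pi^j_0\} \cup \{x \in Q_i : \pi^j x \geq \pi^j_0 + 1\}$ is monotone in $Q_i$, and intersection and convex hull preserve inclusion). Applying this to the inductive hypothesis gives $tSC(tLC^{k-1}(P)) \subseteq tSC(tSC^{k-1}(P)) = tSC^{k}(P)$, while (\ref{tlcintsc}) applied to $tLC^{k-1}(P)$ gives $tLC^{k}(P) = tLC(tLC^{k-1}(P)) \subseteq tSC(tLC^{k-1}(P))$. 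Chaining these yields $tLC^{k}(P) \subseteq tSC^{k}(P)$, completing the induction.

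With propagation in hand, set $r = \lceil n/t \rceil - 1$. The preceding theorem ensures $tLC^{r}(P) \neq P_I$, so there exists $y \in tLC^{r}(P) \setminus P_I$. By the containment just established $y \in tSC^{r}(P) \setminus P_I$ as well, hence $tSC^{r}(P) \neq P_I$ and the $t$-branch split rank of $P$ is at least $r + 1 = \lceil n/t \rceil$.

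I do not foresee any real obstacle: the statement is essentially a routine corollary of the preceding theorem, the only substantive point being the monotonicity argument needed to lift a one-step inclusion between the two closure operators to an inclusion between their $k$-fold iterates.
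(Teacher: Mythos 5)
Your proof is correct and follows the same route as the paper, which simply cites the containment $tLC(P) \subseteq tSC(P)$ from (\ref{tlcintsc}) and calls the corollary an immediate consequence of the preceding theorem. Your write-up just makes explicit the routine monotonicity/induction step showing $tLC^k(P) \subseteq tSC^k(P)$, which the paper leaves implicit.
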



We next define {\em multi-dimensional knapsack cuts} and the associated closure. We define $\mathcal{V}(P) = \{ (\pi, \pi_0) \in \mathbb{Z}^n \times \mathbb{Z} :  \ip{\pi}{x} \leq \pi_0 \; \forall x \in P \}$ to be the set of valid inequalities for $P$.
\begin{definition}
Given a polytope $P \subseteq [0,1]^n$ and a positive integer $t$, the $t$-dimensional knapsack closure of $P$ is the set
\[ tKC(P) = \bigcap_{(\pi^1,\pi^1_0), \ldots, (\pi^t,\pi^t_0) \in \mathcal{V}(P)} \conv{\left(\{x \in \{0,1\}^n : \pi^1 x \leq \pi^1_0, \ldots, \pi^t x \leq \pi^t_0 \}\right)}. \]
\end{definition}
We refer to valid inequalities for $tKC(P)$ as $t$-dimensional knapsack cuts.
The knapsack closure studied in \cite{fischetti2010knapsack} is the same object as $1KC(P)$.
Similarly {\em semantic cutting planes} and the associated closure operator are equivalent to $2KC(P)$.
Dunkel and Schulz \cite{ds} gave a strengthening of GC cuts for polytopes contained in the 0-1 cube and called these M-cuts; these are contained in the family of 1-dimensional knapsack cuts. Therefore the associated closure (M-closure) contains $1KC(P)$.
We next show that 1-dimensional knapsack cuts have the 1-rounding property using an easy counting argument.

\begin{lemma}\label{lem:zero_one_pts}
Let $Y \subseteq \{0,1\}^n$ and $|Y| \geq 2^{n-1} + 1$. Then there exist two points $y^1, y^2 \in Y$ such that $\frac{1}{2}(y^1 + y^2) = \frac{1}{2}\ones$.
\end{lemma}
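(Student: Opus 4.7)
The plan is to observe that the equation $\frac{1}{2}(y^1+y^2) = \frac{1}{2}\ones$ is equivalent to $y^1 + y^2 = \ones$, and since both $y^1, y^2 \in \{0,1\}^n$, this forces $y^2 = \ones - y^1$. So the lemma really asks: if $Y$ has more than $2^{n-1}$ points, then $Y$ contains some 0-1 vector together with its bitwise complement.

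First I would partition the cube $\{0,1\}^n$ into $2^{n-1}$ disjoint complementary pairs of the form $\{z, \ones - z\}$. This is a valid partition because the involution $z \mapsto \ones - z$ on $\{0,1\}^n$ has no fixed points (since $n \geq 1$; note that for $n = 0$ the lemma is vacuous, and for $n \geq 1$ any fixed point $z$ would satisfy $2z = \ones$, impossible over $\{0,1\}^n$). Thus the orbits of this involution all have size exactly $2$, giving $2^{n-1}$ pairs.

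Next I would apply the pigeonhole principle: since $|Y| \geq 2^{n-1} + 1$ and there are only $2^{n-1}$ pairs, at least one pair $\{z, \ones - z\}$ must have both of its elements in $Y$. Taking $y^1 = z$ and $y^2 = \ones - z$ then yields $\frac{1}{2}(y^1 + y^2) = \frac{1}{2}\ones$, as required.

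Honestly, there is no genuine obstacle here; the only subtlety worth noting is the edge case $n = 0$, where the hypothesis $|Y| \geq 2^{-1}+1$ is vacuously handled (and one might simply assume $n \geq 1$ since the lemma is used in the context of polytopes in $[0,1]^n$ with $n \geq 1$). The argument is a clean pigeonhole on the complementation involution, and I would write it in essentially the two lines above.
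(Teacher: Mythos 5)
Your proof is correct and uses essentially the same idea as the paper: the paper also exploits the fixed-point-free complementation map $y \mapsto \ones - y$, arguing that $|Y| + |f(Y)| > 2^n$ forces $Y \cap f(Y) \neq \emptyset$, which is just your pigeonhole on complementary pairs phrased via intersection cardinalities. No meaningful difference in approach or content.
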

\begin{proof}
Note that $f:\{0,1\}^n \rightarrow \{0,1\}^n$ defined by $f(y) = \ones - y$ is a bijective function. Consider a set $Y \subseteq \{0,1\}^n$ of size at least $2^{n-1} + 1$. Then $|f(Y)| \geq 2^{n-1} + 1$ and therefore $Y \cap f(Y) \not = \emptyset$. In other words, $Y$ contains $y^1,y^2$ such that $y^2 = \ones - y^1$. The desired result follows.
\end{proof}
 
\begin{thm}\label{thm:zero_one_cl}
Let $P \subseteq [0,1]^n$ be a polytope and let $\bar{x} \in \{0, \frac{1}{2}, 1\}^n$ with $|E(x)| \geq 1$. If $\bar{x}^{(i,0)}, \bar{x}^{(i, 1)} \in P$ for every $i \in E(\bar{x})$, then $\bar x \in 1KC(P)$.
\end{thm}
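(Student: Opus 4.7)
I plan to reduce to the case $\bar{x} = \frac{1}{2}\ones$ and then, for every valid inequality $\pi x \leq \pi_0$ of $P$, exhibit two 0-1 points in the halfspace $\pi y \leq \pi_0$ whose average is $\bar{x}$. For the reduction, write $E := E(\bar x)$ and consider the slice $\tilde P := \{y \in [0,1]^{E} : (y, \bar x_{\overline{E}}) \in P\}$: the hypothesis for $\bar x$ and $P$ translates into the same hypothesis for $\tilde x := \frac{1}{2}\ones_E$ and $\tilde P$, and any $(\pi, \pi_0) \in \mathcal{V}(P)$ restricts to an element of $\mathcal{V}(\tilde P)$. Hence I may assume $\bar x = \frac{1}{2}\ones$ in $[0,1]^n$.

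Fix $(\pi, \pi_0) \in \mathcal{V}(P)$ and set $\alpha := \sum_j \pi_j$. Applying the cut to $\bar x^{(i,0)}$ and $\bar x^{(i,1)}$ gives $\alpha - \pi_i \leq 2\pi_0$ and $\alpha + \pi_i \leq 2\pi_0$, hence $\alpha + |\pi_i| \leq 2\pi_0$ for every $i$. Let $r := 2\pi_0 - \alpha$; then $|\pi_i| \leq r$ for all $i$. If $\pi = 0$ the conclusion is immediate, so assume $\pi \neq 0$ and $r \geq 1$. Substituting $z = 2y - \ones$ transforms the condition $\pi y \leq \pi_0$ into $\pi z \leq r$ on $\{-1,1\}^n$. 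Let $C := \{z \in \{-1,1\}^n : |\pi z| \leq r\}$. The bijection $z \mapsto -z$ shows that $\{\pi z \leq r\}$ and $\{\pi z \geq -r\}$ have equal size; their union is $\{-1,1\}^n$ (since $r \geq 0$) and their intersection is $C$, so inclusion-exclusion gives
\[ |\{y \in \{0,1\}^n : \pi y \leq \pi_0\}| \;=\; \tfrac{1}{2}(2^n + |C|). \]
By Lemma~\ref{lem:zero_one_pts}, it therefore suffices to show $|C| \geq 2$.

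The step I expect to be the main obstacle is producing a single $z^* \in C$; then $-z^* \in C$ automatically and $-z^* \neq z^*$, so $|C| \geq 2$. For this I will use a greedy flipping argument. Starting from any $z \in \{-1,1\}^n$ with $\pi z > r$, flip a coordinate $j$ with $\pi_j z_j > 0$, which decreases $\pi z$ by $2|\pi_j| \in (0, 2r]$. Because each flip shrinks $\pi z$ by at most $2r$ while the target interval $(-r, r]$ has length $2r$, the sequence of values cannot leap over $(-r, r]$: at the first step where $\pi z$ drops to $\leq r$, it necessarily still exceeds $-r$, placing that point in $C$. The case $\pi z < -r$ is symmetric (flip $j$ with $\pi_j z_j < 0$) and $|\pi z| \leq r$ is already the goal. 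This produces the required $z^* \in C$ and completes the plan.
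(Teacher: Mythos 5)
Your proof is correct, and while it reaches the same endpoint as the paper --- applying Lemma~\ref{lem:zero_one_pts} to a strict majority of cube vertices satisfying $\pi x \le \pi_0$ --- it gets there by a genuinely different route. The paper works directly on the face $F$ of the cube fixing the integral coordinates of $\bar x$ and argues by contradiction: if at most half the vertices of $F$ satisfy the cut, the violating set $Y$ has at least $2^{d-1}$ points; applying Lemma~\ref{lem:zero_one_pts} to the subfaces $\{y_j = 0\}$ and $\{y_j = 1\}$ together with the hypothesis $\bar x^{(j,0)}, \bar x^{(j,1)} \in P$ forces $|Y_{j,0}| = |Y_{j,1}| = 2^{d-2}$ for every $j$, so $Y$ averages to $\bar x$, contradicting validity of the cut at $\bar x \in P$. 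You instead extract the quantitative consequence $|\pi_i| \le r := 2\pi_0 - \sum_j \pi_j$ directly from the hypothesis, use the central symmetry $z \mapsto -z$ of $\{-1,1\}^n$ to obtain the exact count $|\{y \in \{0,1\}^n : \pi y \le \pi_0\}| = \tfrac{1}{2}(2^n + |C|)$, and show $|C| \ge 2$ by a discrete intermediate-value (greedy flipping) argument, using that each flip moves $\pi z$ by $2|\pi_j| \le 2r$ and hence cannot jump over the slab of width $2r$. Your argument is constructive where the paper's is by contradiction, and it isolates exactly how the rounding hypothesis constrains the cut coefficients; the paper's argument avoids your preliminary reduction to the centre of the cube but pays for it by invoking Lemma~\ref{lem:zero_one_pts} twice. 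Both proofs are complete; the details of your reduction to $\tilde P$, the handling of $\pi = 0$, and the termination and non-overshooting of the flipping process all check out.
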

\begin{proof}
Let $d = |E(\bar{x})|$. Note that $\bar x = \frac{1}{2}(\bar{x}^{(i,0)} + \bar{x}^{(i, 1)})$ for some $i \in E(\bar x)$ (which is nonempty) and therefore $\bar x \in P$.
Let $F$ denote the face of the $n$-dimensional unit cube given by $\{x \in [0,1]^n : x_j = \bar{x}_j \; \forall j \not \in E(\bar{x}) \}$.
Consider a $(\pi, \pi_0) \in \mathcal{V}$. Let $U =\{x \in F \cap \{0,1\}^n : \ip{\pi}{x} \leq \pi_0 \}$. 

Assume that $|U| \leq 2^{d - 1}$, and let $Y =\{x \in F \cap \{0,1\}^n : \ip{\pi}{x} \geq \pi_0 + 1\}$. Then we have $Y \cap U = \emptyset$ and $| Y \cup U| = 2^d$ and therefore $|Y| \geq 2^{d-1}$.
Define the subset $Y_{j,0} = \{y \in Y : y_j = 0\}$ for some $j \in E(\bar x)$. Suppose $|Y_{j,0}| \geq 2^{d-2} + 1$ and let $y^*$ be defined as follows: $y^*_j = 0$ and $y^*_i = \frac{1}{2}$ for all $i \in E(\bar x) \setminus \{j\}$. By Lemma \ref{lem:zero_one_pts} applied to the $d-1$-dimensional unit cube formed by the face $\{y \in F : y_j = 0\}$, we have $y^1, y^2 \in Y_{j,0}$ such that $y^* = (y^1+y^2)/2 \Rightarrow \ip{\pi}{y^*} \geq \pi_0 + 1$. However, we know this cannot be the case, since $y^* = x^{(j,0)} \in P$ by the main assumption of the Theorem. Therefore, $|Y_{j,0}| \leq 2^{d-2}$ and by a symmetrical argument $|Y_{j,1}| \leq 2^{d-2}$. Since $2^{d-1} \leq |Y| = |Y_{j, 0}| + |Y_{j, 1}| \leq 2^{d-1} $, it follows that $|Y_{j, 0}| = |Y_{j, 1}| = 2^{d-2}$ and therefore the $j$-th coordinate of all $y \in Y$ averages to $\frac{1}{2}$. Moreover, the same argument holds for all $j \in E(x)$, and therefore the average of all $y \in Y$ is $\bar{x}$. This implies that $\ip{\pi}{\bar{x}} \geq \pi_0 + 1$, but this contradicts the fact that $\pi x \leq \pi_0$ is a valid inequality for $P$ and $\bar{x} \in P$.

Therefore we can assume that $|U| \geq 2^{d-1} + 1$. Then, by Lemma \ref{lem:zero_one_pts} applied to the hypercube formed by $F$, there exist $y^1, y^2 \in U$ such that $\frac{1}{2}(y^1 + y^2) = \bar{x}$, and therefore $\bar{x} \in \conv(\{x \in \{0,1\}^n : \ip{\pi}{x} \leq \pi_0 \})$.
As the choice of $(\pi, \pi_0)$ was arbitrary, the result follows.
\end{proof}

A consequence of the previous result is that the rank of the cropped cube with respect to $1KC(P)$ is exactly $n$.
We do not know if a $k$-rounding property holds for $tKC(P)$ for some constant $k$ when $t > 1$. 
Even without such a rounding property it is not hard to show that the rank of the $n$-dimensional cropped cube with respect to $tKC(P)$ is high.
The cropped cube is defined by $2^n$ inequalities, all of which are necessary for the emptiness of the integer hull.
Standard arguments in \cite{cch} can be used to show that any cutting-plane proof of infeasibilty using $t$-dimensional knapsack cuts must have length at least $2^n/(nt)$ and at most $(nt)^{r+1}$ inequalities, if $r$ is the the rank of the final inequality, and therefore $r = \Omega(n/\log nt)$. In other words, the fact that the cropped cube is defined by exponentially many inequalities and a cutting plane from a family $\mathcal{C}$ is derived from a small number of previous inequalities can be used to show that the rank of the cropped cube is high with respect to $\mathcal{C}$.

We will next discuss more interesting lower bound results for polytopes with a polynomial number of inequalities, namely the Tseitin polytopes defined on bounded-degree graphs. The above simple counting arguments are not sufficient to prove linear lower bounds on rank for these polytopes.

\if 0
We generalize this result in Theorem \ref{thm:zero_one_cl} to imply linear rank lower bounds for the $\{0,1\}^n$-CG closure \cite{dash2020generalized}, aggregation closure \cite{bodur2018aggregation}, and knapsack closure \cite{fischetti2010knapsack}. But first we will need the following technical result. 
\blue{-----------------------}

Here we will present the argument for using multiple inequalities. In particular, we will show how to get a more general version of Lemma \ref{lem:zero_one_pts}. We will show that, for sufficiently large $n$, any set of $2^{n-O(1)}$ points in $\{0,1\}^n$ has a pair of points with Hamming distance at least $n - O(\sqrt{n})$. 

Suppose we have $t$ valid inequalities for $P$. If at least $2^{n-1} + 1$ points in $\{0,1\}^n$ satisfy all $t$ valid inequalities, then $\frac{1}{2}\ones$ also satisfies all $t$ inequalities and we are done. Otherwise, there are at most $2^{n-1}$ points that satisfy all $t$ valid inequalities, and therefore at least $2^{n-1}$ that violate at least one valid inequality. Observe that there are $2^{t} - 1$ different ways a point can violate at least one inequality. Therefore, by pigeonhole principle, there are at least $\frac{2^{n-1}}{2^{t}-1} \geq 2^{n - t - 1}$ points that violate exactly the same subset of the $t$ inequalities. By the argument below, there is a half-integral point with at most $O(\sqrt{n})$ integer components that also violates exactly the same subset of the $t$ inequalities. This violates the assumption that for every subset $J \subseteq [n]$ of size at most $O(\sqrt{n})$, the point $x^{(J,0)}, x^{(J,1)} \in P$. 

We use the following result from \cite{kleitman1966combinatorial} (here we use the contrapositive of the form of Theorem 1.1 from \cite{huang2020subsets}).

\begin{lemma}
Consider a collection of points $\mathcal{F} \subseteq \{0,1\}^n$ of size at least $\sum_{i = 0}^k \binom{n}{i}$ where $k < n/2$. Then there exist $x, y \in \mathcal{F}$ such that $\|x - y\|_1 > 2k$. 
\end{lemma}

Now suppose $|\mathcal{F}| \geq 2^{n-c}$, where $c = O(1)$. Then, if there exists a $c' = O(\sqrt{n})$ such that $\sum_{i = 0}^{n/2 - c'} \binom{n}{i} \leq 2^{n-c}$, the above lemma gives that there exist $x, y \in \mathcal{F}$ such that $\|x - y\|_1 > n - 2c'$. We will now show that such a $c'$ exists.  

Lemma 5 of \cite{gottlieb2012vc} gives the bound $\sum_{i = 0}^k \binom{n}{i} \leq 2^{n H(k/n)}$, where $H(\cdot)$ is the binary entropy function. Taking its first-order Taylor expansion, we have $H(p) \leq 1 - \frac{2}{\ln 2} (\frac{1}{2} - p)^2$ (\red{do we need a citation for this? it seems generally well known -- i.e. it is on Wikipedia}). Finally, observe
\begin{align*}
H \left(\frac{n/2 - c'}{n}\right) &\leq 1 - \frac{2}{\ln 2} \left(\frac{1}{2} - \left(\frac{1}{2} - \frac{c'}{n}\right)\right)^2  \\ \notag
&= 1 - \frac{2}{\ln 2}\left(\frac{c'^2}{n^2}\right),
\end{align*}
Then if 
$$c' = \left\lceil \sqrt{c \frac{\ln 2}{2}} \cdot \sqrt{n} \right\rceil$$
we have 
$$H\left(\frac{n/2 - c'}{n}\right) \leq 1 - \frac{c}{n}$$
and therefore, there exists $c' = O(\sqrt{n})$ such that 
$$\sum_{i = 0}^{n/2 - c'} \binom{n}{i} \leq 2^{n H\left(\frac{n/2 - c'}{n}\right)} \leq 2^{n-c}$$
as desired.

\blue{------------------------}

\fi
%

\section{Linear lower bound on rank}\label{sec:proof}
The 1-rounding property of GC, $N_0, N,$ and  $N_+$ cuts was used in \cite{buresh2003rank} to show that the rank of certain Tseitin polytopes is linear (as a function of dimension) for all these operators. We observe that their proof technique can be used along with the 1-rounding property of split cuts and 1-dimensional knapsack cuts to obtain a similar linear lower bound on rank for these cutting plane families. The latter result also follows from a linear lower bound on rank with respect to 2-dimensional knapsack cuts proved recently in \cite{fleming2021power}. However, our observation on split cut rank is new. Instead of showing this, we instead prove a more general result showing a similar lower bound for $t$-dimensional lattice cuts. To do this, we will modify the proof in \cite{buresh2003rank}. We will refer to the earlier paper for parts of the proof that we do not need to change.

We now explain an idea to show lower bounds on rank for families $\mathcal{C}$ that have the $t$-rounding property for some fixed $t$.
Consider a point $x \in P \setminus P_I$ and define $\prank(x)$ as
\[ \prank(x) = \max\{k : x \in \mathcal{C}^k(P)\}.\]
Clearly $k \geq 0$ and the rank of $P$ with respect to $\mathcal{C}$ is at least $prank(x)+1$. 
We next define a \emph{$\{0,1/2\}$-certificate tree}, a type of certificate for the rank of a polytope that is structured as follows. Each node in the tree is labeled by a point contained in $P \cap \{0, \frac{1}{2}, 1\}^n$.  Each non-leaf node corresponds to a point $x$ and it either has
\begin{enumerate}

    \item $|E(x)| \geq t$, and $2\binom{|E(x)|}{t}$ {\em red children} corresponding to the points $x^{(J,0)}, x^{(J,1)}$ for all  $J \subseteq E(x), |J| \leq t$ or
    \item $|Y|$ \emph{blue children} corresponding to points $y \in Y \subseteq \{0,\frac{1}{2}, 1\}^n$ where $x \in \conv(Y)\setminus Y$.
\end{enumerate}
The following bounds on $\prank(x)$ can be obtained in case it has red or blue children, respectively:
\begin{align}
\prank(x) &\geq 1 + \min_{J \subseteq E(x), |J| \leq t} \min \{\prank(x^{(J, 0)}),  \prank(x^{(J, 1)}) \},\label{eq:induct}\\
\prank(x) &\geq \min_{y \in Y} \prank(y) \label{eq:conv_pts}.
\end{align}
Equation (\ref{eq:induct}) follows from the $t$-rounding property of $\mathcal{C}$.
If there exists a path from the root to a leaf that passes through a red child node at least $k$ times, then equations (\ref{eq:induct}) and (\ref{eq:conv_pts}) together imply that $\prank(x^*) \geq k$, where the root is labeled by the point $x^*$. The tree can be viewed as a certificate of this fact. See Figure \ref{fig:certificate_tree} for an example with $x^* = \frac{1}{2}\ones$.

\begin{figure}[ht]
    \centering
    \includegraphics[width=\textwidth]{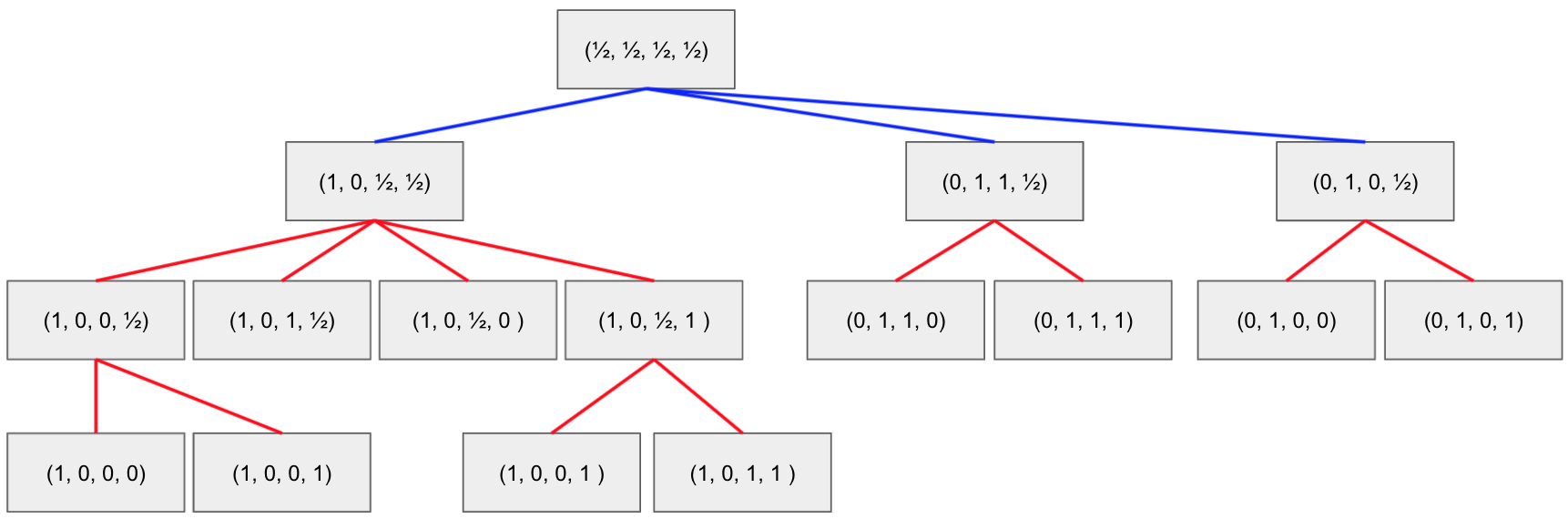}
    \caption{Let $P$ be the convex hull of the points $(1,0,0,0), (1,0,0,1), (1,0,1,1), (0,1,1,0), (0,1,1,1), (0,1,0,0),$ $(0,1,0,1), (1,0,1,\frac{1}{2}), (1,0,\frac{1}{2},1)$. Then notice the above tree certifies $\prank(\frac{1}{2}\ones) \geq 2$. }
    \label{fig:certificate_tree}
\end{figure}

If one considers the certificate tree implicitly defined by the proof that the $n$-dimensional cropped cube has rank $\lceil n/t \rceil$ for cutting plane families that have the $t$-rounding property, one can see that it is uniform in the sense that every path from the root to a leaf has the same number of red nodes, and for nodes at a given depth, all children are of the same type.
In the proof that Tseitin polytopes have linear rank for GC cuts in \cite{buresh2003rank}, the resulting certificate tree are not uniform: nodes at a certain depth may not all have the same color children.

We introduce some notation. Given a graph $G$, let $V(G)$ and $E(G)$ denote its vertex set and edge set, respectively. If $\bar x \in \{0,\frac{1}{2}, 1\}^{E(G)}$ is a half-integral vector, we index its components by the edges in $E(G)$. Then we say that an edge $uv$ is a 0-edge, 1-edge or $1/2$-edge if $\bar x_{uv} = 0, 1,$ or $1/2$, respectively. We let $E(\bar x) = \{uv \in E(G) : \bar x_{uv} = 1/2\}$. We define $H(\bar x)$ to be the subgraph of $G$ where $E(H(\bar x)) = E(\bar x)$ and $V(H(\bar x)) = \{v \in V(G) : \exists uv \in E(\bar x)\}$. In other words, a vertex of $G$ is a vertex of $H(\bar x)$ if there is a 1/2-edge incident to it. Therefore the remaining vertices of $G$ only have 0-edges or 1-edges incident to it. We shorten $V(H(\bar x))$ to $V(\bar x)$ for convenience. We let $e_x(U,V)$ stand for $e_{H(x)}(U,V)$, where $e()$ is the edge expansion function.
Let $P$ be the Tseitin polytope of $G$. We can say that $\bar x \in P$ if
\begin{eqnarray}
   & \mbox{each vertex in } V(\bar x) \mbox{ has at least two  1/2-edges incident to it,} \label{one-x-inP}\\
& \mbox{ each vertex of } G \mbox{ that is not in } V(\bar x) \mbox{ has an odd number of 1-edges incident to it.} \label{two-x-inP} 
\end{eqnarray}


\begin{lemma}\label{lem:avg_of_sols} \cite{buresh2003rank}
Consider the system $Ax \equiv b \modd$ of modular equations over $n$ variables where $A$ is a binary matrix and $b$ is a binary vector. Assume that for every $j \in \{1, \ldots, n\}$, there are solutions $y$ and $y'$ such that  $y_j = 0$ and $y_j' = 1$. Then the average of all $0-1$ solutions is $\frac{1}{2}\ones$. 
\end{lemma}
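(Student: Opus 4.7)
The plan is to view the solution set as an affine subspace over $\mathbb{F}_2$ and exploit the fact that every nonconstant affine functional on such a space is balanced.

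First I would set up the algebraic structure. Since $A \in \mathbb{F}_2^{m\times n}$ and $b \in \mathbb{F}_2^m$, the assumption that there is at least one solution implies the solution set $S \subseteq \{0,1\}^n$ is an affine subspace of $\mathbb{F}_2^n$, namely $S = y_0 + K$ where $y_0$ is any fixed solution and $K = \ker(A)$ over $\mathbb{F}_2$. Let $k = \dim K$, so $|S| = 2^k$.

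Next, I would fix a coordinate $j \in \{1,\ldots,n\}$ and consider the map $\varphi_j : S \to \mathbb{F}_2$ defined by $\varphi_j(y) = y_j$. Viewing $S$ as a $k$-dimensional affine space, $\varphi_j$ is the restriction of the $j$-th coordinate linear functional to $S$, hence is an affine map from a $k$-dimensional $\mathbb{F}_2$-affine space to $\mathbb{F}_2$. I would then invoke the standard dichotomy: any affine map $\mathbb{F}_2^k \to \mathbb{F}_2$ is either constant or surjective with each fiber of size $2^{k-1}$ (because the preimage of $0$ under a nonzero linear functional on $\mathbb{F}_2^k$ is a hyperplane of size $2^{k-1}$, and affine shifts preserve fiber sizes).

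Now I would apply the hypothesis: for this $j$, there exist solutions with $y_j = 0$ and $y'_j = 1$, so $\varphi_j$ is not constant. Therefore $\varphi_j$ is balanced, meaning exactly $2^{k-1}$ solutions have $y_j = 0$ and $2^{k-1}$ have $y_j = 1$. Averaging gives $\frac{1}{|S|} \sum_{y \in S} y_j = \frac{1}{2}$. Since this holds for every coordinate $j$, the average of all solutions is $\frac{1}{2}\ones$.

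I do not expect any real obstacle here; the only subtle point is justifying the balancedness dichotomy for affine (rather than linear) functionals, which follows immediately from translating the kernel of the associated linear map. The whole argument is a few lines of elementary $\mathbb{F}_2$-linear algebra.
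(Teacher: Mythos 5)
Your proof is correct. The paper itself gives no proof of this lemma --- it is quoted from \cite{buresh2003rank} --- but your argument (the solution set is a coset $y_0 + \ker(A)$ over $\mathbb{F}_2$, each coordinate functional restricted to it is affine and hence either constant or balanced with fibers of size $2^{k-1}$, and the hypothesis rules out the constant case for every $j$) is the standard one and is complete; the affine-versus-linear subtlety is handled correctly by translating back to the kernel.
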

The next result is an adapted version Lemma 4.6 of \cite{buresh2003rank}, and its proof follows as well.
\begin{lemma}\label{lem:exist_sol}
Let $G = (V,E)$ be a graph and let $U \subseteq V$ be a subset of vertices with $|U| \leq |V|/2$.
If $e(X, V \setminus X) > 0$ for all $X \subseteq U$, then, for each $b \in \{0,1\}^U$, the following system of modular equations has a 0-1 solution:
\[ \sum_{v \in N(u)} x_{uv} \equiv b_u \modd\quad \forall u \in U. \]
\end{lemma}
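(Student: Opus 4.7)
The plan is to recast the problem as a linear-algebra statement over $\mathbb{F}_2$ and then interpret the failure of solvability graph-theoretically. First I would form the matrix $A \in \mathbb{F}_2^{U \times E}$ obtained by restricting the vertex-edge incidence matrix of $G$ to the rows indexed by vertices in $U$; then the system in the statement is exactly $Ax \equiv b \modd$. Solvability for every $b \in \{0,1\}^U$ is equivalent to $A$ having full row rank over $\mathbb{F}_2$, which in turn is equivalent to the nonexistence of a nonzero $y \in \mathbb{F}_2^U$ with $y^T A \equiv 0 \modd$.

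Next I would argue by contradiction: suppose such a nonzero $y$ exists and put $X = \{u \in U : y_u = 1\}$, a nonempty subset of $U$. The crux is to read off what $y^T A \equiv 0 \modd$ says edge by edge. For each edge $e = uv \in E$, the $e$-th entry of $y^T A$ is $y_u + y_v$, under the convention that $y_w = 0$ for $w \notin U$. Case-splitting on how many endpoints of $e$ lie in $U$, the condition $y^T A \equiv 0 \modd$ forces that every edge incident to a vertex of $X$ has its other endpoint also in $X$. Equivalently, $e_G(X, V \setminus X) = 0$, directly contradicting the hypothesis applied to the nonempty set $X \subseteq U$.

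I expect the only real bookkeeping to be the case analysis on edges with one or zero endpoints in $U$: those columns of $A$ receive contributions from at most one row, and they must be handled carefully so that edges leaving $X$ toward $V \setminus U$ are not accidentally ignored in the parity computation. Once that case is in place, full row rank of $A$ follows, and therefore the prescribed modular system admits a 0-1 solution for every $b \in \{0,1\}^U$. Note that the hypothesis $|U| \leq |V|/2$ does not appear to be used directly in the argument but is consistent with the edge-expansion framing used elsewhere in the paper, where the expansion hypothesis is only known to hold for sets of size at most $|V|/2$.
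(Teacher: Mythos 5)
Your proof is correct and is the standard argument for this fact; the paper itself supplies no proof (it defers to Lemma 4.6 of the cited reference), and the $\mathbb{F}_2$ row-rank/duality argument you give is complete, with the key point handled properly --- namely that under the convention $y_w = 0$ for $w \notin U$, the condition $y^T A \equiv 0 \modd$ forces $y_u = 0$ for any $u$ with a neighbor outside $U$, so a nonzero left-kernel vector really does yield a nonempty $X \subseteq U$ with $e(X, V \setminus X) = 0$, contradicting the hypothesis. Two minor remarks: the hypothesis must of course be read as quantifying over nonempty $X$ (the literal statement is vacuously false for $X = \emptyset$), and you are right that $|U| \leq |V|/2$ plays no role in the lemma itself; it is inherited from the expansion framework in which the lemma is applied.
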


\if 0
\paragraph{Showing that a point is in the split closure.} Consider any $x \in P \cap \{0, \frac{1}{2},1\}^n$. Observe the following two ways to show that $x \in P^k$, where $P^k$ is the $k$-th split closure. 
\begin{enumerate}
    \item For every $i \in E(x)$ it holds that $x^{(i,0)}, x^{(i, 1)} \in P^{k-1}$.
    Then, the fact that $x \in P^k$ follows from Theorem \ref{thm:split_cl}.
    \red{For every $J \subseteq E(x)$ of size $|J| \leq t$, it holds that $x^{(J,0)}, x^{(J, 1)} \in P^{k-1}$.
    Then, the fact that $x \in P^k$ follows from Theorem \ref{thm:t_latt_cl}.}
    
    \item There exists a set of points $Y \in P^k$ such that $x \in \conv(Y)$, so $x \in P^k$ clearly follows. 
\end{enumerate}

Then, observe the following two bounds on $\prank(x)$
\begin{align}
\prank(x) &\geq \min_{j \in E(x), a \in \{0,1\}} \prank(x^{(j, a)}) + 1 \label{eq:induct}\\
\prank(x) &\geq \min_{y \in Y} \prank(y) \quad \forall Y \subseteq P \text{ such that } x \in \conv(Y) \label{eq:conv_pts}.
\end{align}

\red{$\prank(x) \geq \min_{J \subseteq E(x), |J| \leq t, a \in \{0,1\}} \prank(x^{(J, a)}) + 1$}

Then to show that $\frac{1}{2} \ones$ has high rank, we will demonstrate that there exists a sequence of points $\frac{1}{2}\ones,...,x^f$ (with each point having rank at least that of the following point) using (\ref{eq:induct}) and (\ref{eq:conv_pts}) such that the final point $x^f \in P$ and the sequence applies (\ref{eq:induct}) many times (since, by the above equations we see that if the sequence applies (\ref{eq:induct}) $\geq k$ times, then $\rank(\frac{1}{2}\ones) \geq k$). 

\fi

We next prove our main result which yields a lower bound on rank for any family of cutting planes with the $t$-rounding property. As $t$-dimensional lattice cuts have the $t$-rounding property, Theorem ~\ref{thm:tdim_lb} follows.
\begin{lemma} Let $P$ be the Tseitin polytope of a graph $G$ with edge expansion $c > t + 1$.  Let $\mathcal{C}$ be a family of cutting planes with the $t$-rounding property. Then $P$ has rank at least $\frac{(c-(t + 1))}{2t}|V|$ with respect to $\mathcal{C}$. 
\end{lemma}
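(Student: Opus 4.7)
The plan is to adapt the $\{0,\tfrac{1}{2}\}$-certificate tree construction of \cite{buresh2003rank} to the $t$-rounding setting. Since $G$ has edge expansion $c>t+1\ge 2$, taking $S=\{v\}$ in the definition of edge expansion gives $\deg_G(v)\ge c>2$ for every $v\in V$, so the point $\bar x^{*}=\tfrac{1}{2}\ones$ satisfies~(\ref{one-x-inP}) and (vacuously) (\ref{two-x-inP}), and hence lies in $P$ with $V(\bar x^{*})=V$. I will build a certificate tree rooted at $\bar x^{*}$ with the property that every root-to-leaf path contains at least $\tfrac{c-(t+1)}{2t}|V|$ red nodes; combining~(\ref{eq:induct}) -- which is the step that uses the $t$-rounding property of $\mathcal{C}$ -- with~(\ref{eq:conv_pts}) then gives $\prank(\bar x^{*})\ge\tfrac{c-(t+1)}{2t}|V|$, yielding the claimed rank lower bound.

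The tree will be built recursively. At an internal node labeled $\bar x\in P\cap\{0,\tfrac{1}{2},1\}^E$, I will pick a set $J\subseteq E(\bar x)$ of $t$ fractional edges whose endpoints have relatively high degree in $H(\bar x)$, and install the two red children $\bar x^{(J,0)}$ and $\bar x^{(J,1)}$. A red child can fail to lie in $P$ only if some endpoint of an edge in $J$ is left with a single incident $\tfrac{1}{2}$-edge (violating~(\ref{one-x-inP})) or loses all its $\tfrac{1}{2}$-edges with the wrong parity of $1$-edges (violating~(\ref{two-x-inP})). Whenever this happens I attach blue children expressing the offending red child as a convex combination of valid half-integral points of $P$: each valid point is obtained by propagating further $0/1$ fixings along ``lone-$\tfrac{1}{2}$-edge'' chains until every remaining fractional vertex again has at least two incident $\tfrac{1}{2}$-edges, with Lemma~\ref{lem:exist_sol} supplying the parity-consistent $0/1$ continuations that close off the chains.

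The rank lower bound then follows from a potential argument paralleling Sections~4--5 of \cite{buresh2003rank}. I will define a progress measure on $H(\bar x)$ such that (i) its value at the root $\bar x^{*}$ is at least $(c-(t+1))|V|$, (ii) it reaches $0$ exactly at nodes that admit a purely-blue decomposition into $0/1$ points of $P$ (such a decomposition being supplied by Lemmas~\ref{lem:avg_of_sols} and~\ref{lem:exist_sol}), and (iii) it drops by at most $2t$ per red step, even after accounting for any blue propagation attached to that red step. Dividing (i) by (iii) then produces the required bound of $\tfrac{c-(t+1)}{2t}|V|$ red nodes along every root-to-leaf path.

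The hard part will be establishing (iii): a single $t$-edge red step has $2t$ endpoints, each of which can in principle initiate a long blue propagation chain that sweeps many additional vertices out of $V(\bar x)$, and one must argue that the residual expansion of $H(\bar x)$ forces these chains to be short. The gap $c-(t+1)>0$ is precisely what is needed here: intuitively, the ``$t+1$'' accounts for the $t$ edges that are fixed by the red step plus the single unit of expansion that must be spent to close off each induced parity chain. This is the step that differs nontrivially from the $t=1$ analysis of \cite{buresh2003rank}, whose analogous gap is $c-2$; once this bookkeeping is in place, the remainder of the argument (the selection of $J$, the construction of the blue subtrees, and the termination criterion) carries over from \cite{buresh2003rank} by replacing every single-coordinate fixing with a $t$-coordinate fixing and invoking the $t$-rounding hypothesis on $\mathcal{C}$ in place of the $1$-rounding property of GC cuts.
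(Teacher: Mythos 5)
Your high-level plan (a $\{0,\tfrac12\}$-certificate tree rooted at $\tfrac12\ones$, with red steps consuming the edge expansion of $G$ and blue steps re-expressing a point as a convex combination via Lemmas~\ref{lem:avg_of_sols} and~\ref{lem:exist_sol}) matches the paper's, but there are two genuine gaps in how you instantiate it. First, you propose to \emph{pick} a single favorable set $J$ of $t$ fractional edges and install only the two red children $\bar x^{(J,0)},\bar x^{(J,1)}$. The $t$-rounding property, and hence inequality~(\ref{eq:induct}), takes a minimum over \emph{all} $J\subseteq E(x)$ with $|J|\le t$: a red node must have all $2\binom{|E(x)|}{t}$ red children, and every one of them must lie in the relevant closure. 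You cannot choose the $J$ whose endpoints have high degree; you must guarantee that \emph{every} $t$-subset of fractional edges can be rounded. This is exactly why the paper only issues red children at a node $x$ where every small $U\subseteq V(x)$ satisfies $e_x(U,V(x)\setminus U)>(t+1)|U|$, which forces the minimum degree of $H(x)$ to be at least $t+2$ and hence keeps all red children inside $P$. Second, your fallback of ``whenever a red child fails to lie in $P$, attach blue children expressing the offending red child as a convex combination of valid half-integral points of $P$'' is impossible: if $y\notin P$ and $Y\subseteq P$, then $\conv(Y)\subseteq P$, so $y\notin\conv(Y)$. The red/blue decision must be made \emph{at the parent}, before any rounding, based on whether a non-expanding set $U$ exists in $H(x)$; it cannot be used to repair a bad red child after the fact.

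Beyond these structural issues, the quantitative core of your argument --- the potential function with drop at most $2t$ per red step --- is exactly the part you flag as ``the hard part'' and leave unproved, and it is not how the paper closes the count. The paper instead gives each node a budget $\ell_x$ starting at $|V|/2$, charges each blue step a maximal non-expanding set $U$ with $\ell_y=\ell_x-|U|$, and observes that along any root-to-leaf path the accumulated set $S=\bigcup U$ has size exactly $|V|/2$; applying the expansion of $G$ to $S$ gives $e(S,V\setminus S)>c|S|$ edges that must be fixed to $0/1$ by the leaf, of which at most $(t+1)|S|$ are fixed by blue steps, leaving at least $(c-(t+1))|V|/2$ to be fixed by red steps at $t$ edges apiece. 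This is a global accounting over the whole path, not a local per-step potential drop, and the maximality of $U$ (giving~(\ref{blue-child})) is what restores the red-step precondition after each blue step. As written, your proposal does not contain a correct mechanism for either the red-step validity or the final count.
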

\begin{proof}
To prove the result, we will construct a $\{0, 1/2\}$-certificate tree $\mathcal{T}$ with the root labeled by $\frac{1}{2}\ones$ such that a path from the root to any leaf visits a red child at least $\frac{c-(t + 1)}{2t}|V|$ times.
As an intermediate step, we will construct an auxiliary tree $\mathcal{T}^A$ which will have the property that deleting its leaves results in the desired certificate tree. 
Let the root of $\mathcal{T}^A$ be labeled by $x^* = \frac{1}{2}\ones$ where $\ones$ has $E(G)$ components.
As $x^*$ satisfies the conditions (\ref{one-x-inP}) and (\ref{two-x-inP}) (the set $V(G) \setminus V(x^*)$ is empty), we observe that $x^* \in P$.
Assume that every node in $\mathcal{T}^A$, labeled by a vector $x$, has an associated nonnegative integer $\ell_x$.
Let $l_{x^*} = |V|/2$.

We next inductively define how to construct $\mathcal{T}^A$.
Consider an already constructed node in $\mathcal{T}^A$ labeled by the point $x \in P \cap \{0, \frac{1}{2}, 1\}^{E(G)}$. Let $\ell_x > 0$ and $|E(x)| \geq t$.
By definition, all inequalities defining $P$ are satisfied by $x$ and condition (\ref{two-x-inP}) is satisfied.

Assume that for all $U \subseteq V(x)$ of size at most $\ell_x$ we have $e_x(U, V(x) \setminus U) > (t + 1)|U|$. 
In this case we give the node $x$ red children. Let $y$ be the label of a red child of $x$. By definition, $t$ $1/2$-edges in $x$ are converted to 0-edges or 1-edges in $y$.
As the minimum vertex degree of $H(x)$ is at least $t + 2$, it follows that $V(y) = V(x)$. Furthermore, we have
\begin{equation}\label{red-child}
e_y(U, V(y)\setminus U) > |U|\mbox{ for all } U \subseteq V(y) \mbox{ of size at most }l_y.
\end{equation}
Therefore the minimum degree of $H(y)$ is $2$. There is no change in the 0-edges and 1-edges incident to vertices not in $V(x)$ when we move from $x$ to $y$. Therefore $y$ satisfies (\ref{one-x-inP}) and (\ref{two-x-inP}) and $y \in P$. We set $l_y = l_x$.

Note that $x^*$ satisfies the above assumption. Therefore the root is given red children, and we repeatedly apply the above rule till we have a node of $\mathcal{T}^A$ for which the above assumption does not hold. Assume again that the node is labeled by $x$.

Then we can assume that there exists $U \subseteq V(x)$ with size at most $ \ell_x$ such that $e_x(U, V(x) \setminus U) \leq (t + 1)|U|$.  Assume $U$ is a maximal set with this property. Then,
\begin{equation}\label{blue-child}
e_x(W, V(x)\setminus (U \cup W)) > (t+1)|W| \mbox{ for all  } W \subseteq V(x) \setminus U \mbox{ with } |W| \leq \ell_x - |U|.
\end{equation}
If (\ref{blue-child}) were not true for some $W \neq \emptyset$, then $U\cup W$ would violate the maximality of $U$. 
Let $Y$ be the set of all $y$ satisfying the system of odd-parity equations for vertices in $U$ such that $y_{uv} \in \{0,1\}$ for all $uv \in E(x)$ incident to $U$ and $y_{uv} = x_{uv}$ elsewhere. These points are the blue children we give $x$. We will now show that $Y$ is nonempty and the points in $Y$ average to $x$. Consider any $uv \in E(x)$ such that $u \in U$. If we delete edge $uv$ from $H(x)$ to obtain the subgraph $H'$ (with vertex set $V(x)$), then $e_{H'}(W, V(H') \setminus W) > 0$ for all subsets $W \subseteq U$. 
This last inequality follows from equation (\ref{red-child}) with $y$ replaced by $x$ in the equation, since $x$ is a red child of its parent.
Therefore, if we consider the odd-parity equations for nodes in $U$ with all variables outside $H(x)$ fixed to 0 or 1, then Lemma \ref{lem:exist_sol} implies that there is a solution $w$ to these equations with $w_{uv} = 0$, and another solution $w'$ with $w'_{uv} = 1$. By appending the values $x_{uv}$ for edges not incident with $U$, we get a 
$y \in Y$ with $y_{uv} = 0$ and a $y' \in Y$ with $y'_{uv} = 1$. 
Then, Lemma \ref{lem:avg_of_sols} ensures that these solutions average to $x$. 

Set $\ell_y = \ell_x - |U|$ for every child $y \in Y$. If $|U| < \ell_x$, then
$l_y > 0$, and we conclude from (\ref{blue-child}) that $V(y) = V(x) \setminus U$ and the assumptions for red children are satisfied by $y$.
Therefore every vertex $v \in V(x) \setminus U$ has degree at least $2$ in $H(y)$. This implies every inequality of $P$ is satisfied, since the equations corresponding to $U$ are satisfied by the above construction, and all other equations corresponding to nodes in $V(x) \setminus U$ have at least two $\frac{1}{2}$s, so the corresponding inequalities in the description of $P$ are satisfied. Finally, all vertices in $V \setminus V(x)$ have none of their incident edge values changed.  Therefore $y \in P$.
If $|U| = \ell_x$, then $\ell_y = 0$ and we call $y$ a leaf of the tree. Thus all leaves are blue children.

We next show that there exists a path in $\mathcal{T}^A$ from the root to a leaf node that passes through at least $\frac{c - (t + 1)}{2t}|V|$ red children. Consider any path from the root to a leaf node in $\mathcal{T}^A$, and let $$S = \bigcup_{U \text{ used for blue children in path }} U.$$ Since $S$ has size $|V| / 2$, we know that $e(S, V \setminus S) > c|S|$ by the edge expansion property of $G$. Note that all of the edges in this cut are set to 0-1 in the leaf node, and that at most $(t + 1)|S|$ of them were set to 0-1 when traversing to blue children. Then, we know that at least $(c - (t + 1))|S| = (c - (t + 1))\frac{|V|}{2}$ of them were set to 0-1 when traversing to red children, and each traversal to a red child sets at most $t$ edges to 0 or 1.

Finally, note that the parent nodes of the leaves of $\mathcal{T}^A$ are labeled by points that are contained in $P$, and also that the leaves are all blue children. Therefore, removing all the leaves of $\mathcal{T}^A$ results in a tree $\mathcal{T}$ that satisfies the conditions of a certificate tree, but also has the same number of red children in any path from the root to a leaf as in $\mathcal{T}^A$. The result follows. 
\end{proof}


\bibliography{bib}
\bibliographystyle{alpha}

\end{document}